\DeclareMathOperator*{\argmin}{arg\,min}
\newcommand{\IR}{{\mathbb{R}}}
\newcommand{\opt}{\rm{opt}}
\newcommand{\bmt}{\left[ \begin{array}{ccccccccc}}
	\newcommand{\emt}{\end{array}\right]}
\newcommand{\bean}{\begin{eqnarray*}}
	\newcommand{\eean}{\end{eqnarray*}}
\newcommand{\bea}{\begin{eqnarray}}
	\newcommand{\eea}{\end{eqnarray}}
\newcommand{\eq}{\begin{equation}\begin{array}{lllllllll}}
		\newcommand{\ee}{\end{array}\end{equation}}
\newcommand{\eqn}{\begin{equation*}\begin{array}{lllllllll}}
		\newcommand{\een}{\end{array}\end{equation*}}
\crefname{hypothesis}{Hypothesis}{Hypotheses}
\def\om{\omega}
\def\th{\theta}
\def\R{\mathbb R}
\title{Optimized sparse approximate inverse smoothers for\\   solving Laplacian linear systems\thanks{Submitted to the editors \today.
	}}
\author{Yunhui He\thanks{Department of Computer Science, The University of British Columbia, Vancouver, BC, V6T 1Z4, Canada.
		(\email{yunhui.he@ubc.ca}).}
	\and Jun Liu\thanks{Corresponding author. Department of Mathematics and Statistics, Southern Illinois University Edwardsville, Edwardsville, IL 62026, USA.
		(\email{juliu@siue.edu}).}
	\and Xiang-Sheng Wang\thanks{Department of Mathematics, University of Louisiana at Lafayette, Lafayette, LA 70503, USA. (\email{xswang@louisiana.edu}).}
}
\DeclareMathOperator{\diag}{diag}
\begin{document}

\maketitle

\begin{abstract}
	In this paper we propose and analyze new efficient sparse approximate inverse (SPAI) smoothers for solving
	the two-dimensional (2D) and three-dimensional (3D) Laplacian linear system with geometric multigrid methods.
	Local Fourier analysis  shows that our proposed SPAI smoother for 2D achieves a much smaller smoothing factor than the state-of-the-art SPAI smoother studied in [Bolten, M., Huckle, T.K. and Kravvaritis, C.D., 2016. Sparse matrix approximations for multigrid methods. Linear Algebra and its Applications, 502, pp.58-76.]. The proposed SPAI smoother for 3D cases provides smaller optimal smoothing factor than that of weighted Jacobi smoother. Numerical results  validate our theoretical conclusions and illustrate  {the high-efficiency and high-effectiveness}   of our proposed SPAI smoothers.
	{Such SPAI smoothers have the advantage of inherent parallelism.}
\end{abstract}

\begin{keywords}
  multigrid, local Fourier analysis, sparse approximate inverse, smoothing factor, Laplacian
\end{keywords}

\begin{AMS}
   49M25, 49K20, 65N55, 65F10
\end{AMS}
\section{Introduction}
Laplace operator or Laplacian is used ubiquitously in describing various physical phenomena through partial differential equation (PDE) models, such as Poisson equation, diffusion equation, wave equation, and Stokes equation.
For efficient numerical solution of such PDE models, a fundamental task is to (approximately) solve the corresponding sparse discrete Laplacian linear system upon suitable discretization schemes (e.g., finite difference method), where both direct and iterative solvers are extensively studied in the past century.
In this paper, we will focus on using multigrid solver for the two-dimensional (2D) and three-dimensional (3D) Laplacian system with the 5-point and 7-point stencil central finite difference scheme, respectively, where new sparse approximate inverse (SPAI)  smoothers are our major contribution.
More specifically, we consider the following Poisson equation
on a unit square domain $\Omega=(0,1)^d, d=2,3$ with Dirichlet boundary condition:
\eq \label{Poisson}
-\Delta u=f\quad\mbox{in}\quad\Omega,\qquad u=g \quad\mbox{on}\quad\partial\Omega,
\ee
which, upon applying the standard second-order accurate 5-point stencil (2D) and 7-point stencil (3D) central finite difference scheme  with a uniform mesh step size $h=1/N$, leads to a large-scale symmetric positive definite sparse linear system
\eq \label{Ax=b}
A_h u_h=b_h,
\ee
where $u_h$ denotes the finite difference approximation to the true solution $u$ over the set $\Omega_h$ of all interior grid points, $b_h$ encodes the source term $f$ and boundary data $g$, and $A_h$ has the following well-known 5-point (2D) or 7-point (3D) stencil representation
\begin{equation} \label{A_h}
	A_h = \frac{1}{h^2}\begin{bmatrix}
		& -1 &   \\
		-1 & 4& -1 \\
		& -1 &
	\end{bmatrix}\ \mbox{or}\ \frac{1}{h^2} \begin{Bmatrix}
	\begin{bmatrix}
		& 0 &   \\
		0 & -1& 0 \\
		& 0 &
	\end{bmatrix}
	\begin{bmatrix}
		& -1 &   \\
		-1 & 6& -1 \\
		& -1 &
	\end{bmatrix}
	\begin{bmatrix}
		& 0 &   \\
		0 & -1& 0 \\
		& 0 &
	\end{bmatrix}
\end{Bmatrix}
\end{equation}
For simplicity, we assume $f$ and $g$ to be sufficiently smooth such that the finite difference approximation $u_h$ by (\ref{Ax=b})  achieves a second-order accuracy in infinity norm, that is $\|u_h-u|_{\Omega_h}\|_\infty=O(h^2)$. For less regular $f$ and $g$, finite element discretization may be used to improve approximation accuracy in possible different weaker norms.

Due to the large condition number of the Laplacian matrix $A_h$ as the mesh step size $h$ is refined, the stationary iterative methods (e.g. Jacobi and Gauss-Seidel iterations) usually converge extremely slowly as the system size $n$ is increased. In contrast, multigrid methods can deliver mesh-independent convergence rate and the optimal computational complexity for solving the above linear system (\ref{Ax=b}),
where the choice of efficient and effective smoother is the key component.
In the past few decades, Poisson equation (\ref{Poisson}) has been numerically solved by various multigrid methods based on different smoothers and discretization schemes, see
\cite{braess1981contraction,braess1982convergence,braess1984convergence,holter1986vectorized,zhang1996acceleration,zhang1998fast,guillet2011simple,baker2011multigrid,notay2015massively,pan2017extrapolation,branden2022grid} and the references therein.
Though more effective than the weighted Jacobi smoother, the popular Gauss-Seidel smoother is not very friendly to  massively parallel computers due to its sequential nature \cite{adams2003parallel,notay2015massively}.
For general symmetric positive definite linear systems, significant efforts in the development of multigrid solvers have been concentrated on the design of effective parallelizable smoothers with smaller smoothing factors (and faster convergence rates), see for example \cite{birken2011designing,huang2021learning,brown2021tuning,CH2021addVanka,lottes2022optimal,Kraus2012,Yang2017} and the references therein.
In \cite{Brannick2015}, the authors compared three different Chebyshev polynomial smoothers in the context of aggressive coarsening,
where the one-dimensional  minimization formulations are defined over a finite interval that bounds all the eigenvalues of  diagonally preconditioned system.
In this paper, we will only focus on the development of effective and highly parallelizable SPAI smoothers,
whose convergence rates can be precisely estimated by local Fourier analysis (LFA), a quantitative tool to study multigrid convergence performance and optimize relaxation parameters.

Inspired by the  well-studied sparse approximate inverse preconditioners \cite{cosgrove1992approximate,benzi1996sparse,Grote1997,gould1998sparse,benzi1998sparse,tang1999toward,Benzi1999,saad2003iterative,bertaccini2018iterative} for preconditioning sparse linear systems, the class of so-called SPAI smoothers were widely studied \cite{Tang2000,Brker2001,Brker2002,Bolten2016} for general linear systems, where superior smoothing effects were achieved. Besides broad applicability, the inherent parallelism \cite{Grote1997,bertaccini2016sparse} in the framework of parallel computing is one major advantage of SPAI smoothers.
The construction of high quality SPAI preconditioners or smoothers of general linear systems is computationally expensive since it often requires to solve (multiple) norm minimization problems. However, for our considered well-structured linear systems, it is possible to analytically find the symbols of highly optimized and effective SPAI smoothers through LFA techniques, which completely avoids the expensive numerical construction procedure using various optimization formulations.

In this work, we use LFA to derive new SPAI smoothers for 2D and 3D Laplacian problems. Our proposed smoothers are more efficient and effective than these studied in \cite{Bolten2016,CH2021addVanka}.
In the literature,  LFA has been widely used to study different discretization and relaxation schemes for the Poisson equation \eqref{Poisson}. For example,
\cite{he2020two}  uses LFA to study Jacobi smoother of multigrid methods for higher‐order finite‐element approximations to the Laplacian problem.
In \cite{de2021two}, multiplicative
Schwartz smoothers are investigated by LFA for isogeometric discretizations of the Poisson equation. While, \cite{CH2021addVanka} studies additive Vanka-type smoothers.   Multigrid methods based on triangular grids with  standard-coarsening and three-coarsening strategies  for the Poisson equation  are studied in \cite{gaspar2009fourier,gaspar2009geometric}.
 {Our new SPAI smoothers are very efficient in inverting Laplacian.}

The whole paper is organized as follows.  In the next section we recall basic concepts and ideas of LFA that will be used in our analysis.
In Section 3,  new SPAI smoothers are developed and analyzed,
 where the technical proofs of our main theoretical results Theorem 3.1 and 3.2 are given.
In Section 4, several  numerical examples are reported to confirm
the LFA predicted multigrid convergence rates of our proposed SPAI smoothers.
Finally, some conclusions are given in Section 5.
\section{A brief review of LFA}
Local Fourier analysis (LFA) \cite{trottenberg2000multigrid,wienands2004practical} is the standard tool  to quantitatively predict the convergence rate of a given multigrid algorithm. In this section we briefly describe the main mechanism of how LFA works.
In literature of multigrid methods, LFA is very useful to study the performance of multigrid smoothers. For this,  a typical LFA procedure  includes the following three steps:
\begin{itemize}
	\item[(1)]  choose a good smoother (e.g. Jacobi) based on the system coefficient matrix;
	\item[(2)] analyze  LFA smoothing factor $\mu(\omega)$ of a $\omega$-parameterized relaxation scheme,
	and (exactly or approximately) find the best relaxation parameter $\omega_{\opt}$ that minimizes $\mu(\omega)$, which often requires very technical and tedious analysis;
	\item[(3)]  numerically verify the corresponding LFA two-grid convergence factor and the actual multigrid convergence rate and compare with the obtained optimal smoothing factor $\mu_{\opt}=\mu(\omega_{\opt})$.
\end{itemize}
To address some predictive limitation of the smoothing factor $\mu$, where coarse-grid correct plays an important role in multigrid performance,  the more sophisticated two-grid LFA convergence factor  provides a more reliable estimate of actual multigrid convergence rate. For the finite difference discretization considered here, the LFA smoother factor can offer a sharp prediction of actual multigrid performances. Thus, we will focus on optimizing the smoothing factor $\mu$ analytically and then checking the two-grid LFA convergence factor numerically (see below Table 1).
Clearly,  the choice of an efficient and effective smoother plays a decisive role in determining the practical convergence rate of the overall multigrid algorithm, and the selection of best relaxation parameter $\omega_{\opt}$ is highly dependent on the chosen smoother too. We aim to design and analyze fast sparse approximate inverse smoothers by LFA techniques.

In the standard (geometric) multigrid method for solving the linear system (\ref{Ax=b}), the most commonly used smoothers (e.g., damped Jacobi or Gauss-Seidel) have the following preconditioned Richardson iteration form
\eq \label{smoother}
u_h^{k+1}=u_h^{k}+\omega M_h(b_h-A_h u_h^k)=\underbrace{(I_h-\omega M_h A_h)}_{S_h(\omega)} u_h^{k}+\omega M_h b_h,
\ee
where $M_h$ approximates $A_h^{-1}$, $\omega\in\IR$ is a damping parameter to be determined,
and $S_h(\omega)$ is called relaxation error operator.
For example, the damped Jacobi smoother takes $M_h=\diag(A_h)^{-1}$ and the Gauss-Seidel smoother uses $M_h=\mathrm{tril}(A_h)^{-1}$, where $\mathrm{tril}(A)$ extracts the lower triangular part of $A$.
Let $\varrho(S_h(\omega))$ be the spectral radius of $S_h(\omega)$. Then the   above fixed-point iteration (\ref{smoother}) is asymptotically convergent if and only if $\varrho(S_h(\omega))<1$, which enforces some restrictions on the choice of $\omega$. To estimate the multigrid performance, we can examine the smoothing effects of $S_h(\omega)$, that is how effectively it reduces  the   high frequency components of approximation errors.
However, in practice, it is hard to directly compute or estimate $\varrho(S_h(\omega))$, instead, we can use LFA to study the smoothing properties of $S_h(\omega)$ via its Fourier symbol.

We first give some definitions of LFA following \cite{trottenberg2000multigrid}.
With the standard coarsening, the low and high frequencies are given by ${\boldsymbol{\theta}  }\in  T^{\rm{L}} =\left[-\frac{\pi}{2}, \frac{\pi}{2}\right)^d$, $\boldsymbol{\theta} \in  T^{\rm{H}} =\left[-\frac{\pi}{2}, \frac{3\pi}{2}\right)^d \setminus T^{\rm{L}}$, respectively.
We define the LFA \textit{smoothing factor} for   $S_h(\omega)$  as
\begin{equation}
	\mu_{\rm loc}(S_h(\omega)) := \max_{\boldsymbol{\theta} \in T^{\rm{H}}}\{\varrho(\widetilde{S}_h(\omega;\boldsymbol{\theta}))\},
\end{equation}
where {the matrix} $\widetilde{S}_h(\omega;\boldsymbol{\theta})$ is the symbol of $S_h(\omega)$ and $\varrho(\widetilde{S}_h(\omega;\boldsymbol{\theta}))$  {stands} for its spectral radius.
In particular, for the finite difference Laplacian operator considered here, the symbol  $\widetilde{S}_h(\omega;\boldsymbol{\theta})$ is just a scalar number.
We also define the optimal smoothing factor $\mu_{\rm opt}$ and the corresponding optimal relaxation parameter $\omega_{\rm opt}$ as
\begin{equation}
	\mu_{\rm opt} :=\min_{\omega\in\IR}\mu_{\rm loc}(S_h(\omega)),\qquad {\omega_{\rm opt}:=\argmin_{\omega\in\IR} \mu_{\rm loc}(S_h(\omega))}.
\end{equation}
In general, it is very difficult to analytically find the values of $\mu_{\rm opt}$  and $\omega_{\rm opt}$.  We point out that LFA assumes periodic boundary conditions and it does not consider the potential influence of different boundary conditions. However, in many applications, either the LFA smoothing factor or  two-grid LFA convergence factor offers sharp predictions of  problems with  other boundary conditions \cite{rodrigo2017validity}.

Let $\widetilde{A}_h(\bm\theta)$ and $\widetilde{M}_h(\bm\theta)$ be the scalar symbol of $A_h$
and $M_h$, respectively.  Then the symbol of  $S_h(\omega)=I_h-\omega M_h A_h$ is obviously given by (note $\widetilde I_h=1$)
\eq
 \widetilde{S_h}(\omega;\bm\theta)=\widetilde I_h-\omega \widetilde M_h \widetilde A_h=1-\omega \widetilde{A}_h(\bm\theta)\widetilde{M}_h(\bm\theta),
\ee
which leads to a min-max optimization for finding the optimal smoothing factor
\begin{equation}
	\mu_{\rm opt} =\min_{\omega\in\IR} \max_{\boldsymbol{\theta} \in T^{\rm{H}}}   | 1-\omega \widetilde{A}_h(\bm\theta)\widetilde{M}_h(\bm\theta) |.
\end{equation}
If there holds $0<\lambda_0\le \widetilde{A}_h(\bm\theta)\widetilde{M}_h(\bm\theta)\le \lambda_1$ for all ${\boldsymbol{\theta} \in T^{\rm{H}}}$, then it is easy to obtain
\begin{equation} \label{mu_opt}
	\mu_{\rm opt} =  | 1-\omega_{\opt} \lambda_0 |=| 1-\omega_{\opt} \lambda_1|= \frac{\lambda_1-\lambda_0}{\lambda_1+\lambda_0}=1-\frac{2}{1+\lambda_1/\lambda_0}<1
\end{equation}
with
\begin{equation} \label{omega_opt}
\omega_{\opt}=2/(\lambda_0+\lambda_1).
\end{equation}
Hence $\mu_{\rm opt}$  is an increasing function of  $\lambda_1/\lambda_0\in [1,\infty)$ and a smaller ratio (or spectral condition number) $\lambda_1/\lambda_0$ gives a smaller smoothing factor. The best choice of $\widetilde{M}_h(\bm\theta)$ highly depends on the expression of $\widetilde{A}_h(\bm\theta)$ and hence the structure of matrix $A_h$.
Mathematically, one may suggest to select $\widetilde{M}_h(\bm\theta)=1/\widetilde{A}_h(\bm\theta)$ such that $\lambda_1/\lambda_0=1$ leading to	$\mu_{\rm opt}=0$, which is however impractical in computation since it requires the dense matrix $M_h=A_h^{-1}$.
Nevertheless, it is still desirable to choose $\widetilde{M}_h(\bm\theta)\approx 1/\widetilde{A}_h(\bm\theta)$ for all ${\boldsymbol{\theta} \in T^{\rm{H}}}$ such that $\lambda_1/\lambda_0$ is minimized  in certain  sense and meanwhile the matrix-vector product $M_h r_h$ with the residual vector $r_h:=b_h-A_h u_h^k$  is efficient to compute.  In the next section, we will construct effective $M_h$ by minimizing $\mu_{\opt}$ over a class of predefined symmetric stencil pattern that leads to parameterized symbol $\widetilde{M}_h(\bm\theta)$. {From \eqref{mu_opt},   we only need to either minimize $\lambda_1/\lambda_0$ or maximize $\lambda_0/\lambda_1$ to obtain the optimal smoothing factor, and the corresponding optimal $\omega$ is given by \eqref{omega_opt}.}
We first notice that a scalar multiple of $\widetilde{M}_h(\bm\theta)$ does not change $\mu_{\opt}$,
but it indeed leads to a rescaled $\omega_{\opt}$, hence one can normalize the symbol to simplify the analysis.
\section{New optimized SPAI smoothers}

\subsection{2D case}
For the 2D 5-point stencil $A_h$ given in (\ref{A_h}), its symbol reads
\begin{equation}\label{eq:symbol-Lh2D}
	\widetilde{A}_h(\bm\theta)=\frac{1}{h^2}(4-e^{i\theta_1}-e^{-i\theta_1}-e^{i\theta_2}-e^{-i\theta_2})=\frac{2}{h^2}(2-\cos \theta_1 -\cos \theta_2).
\end{equation}
The weighted point-wise Jacobi smoother $M_{J}=\diag(A_h)^{-1}$ has a  singleton stencil
\begin{equation}
	M_{J}= \frac{h^2}{4}\begin{bmatrix}
		& 0 &    \\
		0 & 1& 0 \\
		& 0 &
	\end{bmatrix}
\end{equation}
with  symbol $\widetilde{M}_{J}(\bm\theta)=h^2/4$,
which was shown to achieve the optimal smoothing factor  $\mu_{\opt}=3/5=0.6$ with $\omega_{\opt}=4/5=0.8$ \cite{trottenberg2000multigrid}.
Although $M_J$ is very easy to parallelize, its convergence rate of $0.6$ becomes rather slow and inefficient for large-scale systems.

In the seminar work \cite{Tang2000}, the authors derived a  5-point  SPAI smoother
\begin{equation}
	M_{\textrm{5,TW}}= \frac{h^2}{61}\begin{bmatrix}
		& 3 &    \\
		3 & 17& 3 \\
		& 3 &
	\end{bmatrix}
\end{equation}
with its symbol $\widetilde{M}_{\textrm{5,TW}}(\bm\theta)=h^2(17+6\cos\theta_1+6\cos\theta_2)/61$,
which was shown to have a smoothing factor of $21/61\approx 0.344$ if choosing $\omega=1$.
In fact, LFA shows this particular smoother $M_{\textrm{5,TW}}$ can achieve the optimal smoothing factor $\mu_{\opt}\approx 0.273$ if taking $\omega_{\opt}\approx 1.108$. Nevertheless, such a 5-point SPAI smoother can be further improved to obtain a smaller $\mu_{\opt}$. Specifically, in \cite{Brandt1977,Brker2001,Bolten2016}, the authors obtained the following best 5-point SPAI smoother (among all symmetric 5-point stencils)
\begin{equation}
	M_{\textrm{5}}= \frac{8h^2}{41}\begin{bmatrix}
		& 1 &    \\
		1 & 6& 1 \\
		& 1 &
	\end{bmatrix}
\end{equation}
with its symbol $\widetilde{M}_{\textrm{5}}(\bm\theta)=(48+16\cos\theta_1+16\cos\theta_2)/41$,
which gives the  optimal smoothing factor  $\mu_{\opt}=9/41\approx 0.220$ with $\omega_{\opt}=1/4= 0.25$.
One may wondering  can we construct a new SPAI smoother with a   smaller optimal smoothing factor $\mu_{\opt}$?


The short answer is \textit{yes}, but we have to search from those SPAI smoothers with wider/denser stencil. In this paper, for the sake of computational efficiency, we consider a general class of symmetric 9-point stencil smoothers of the following form
\[
\Upsilon(\alpha,\beta,\gamma):={h^2}\begin{bmatrix}
	\gamma	& \beta & \gamma \\
	\beta & \alpha& \beta \\
	\gamma	& \beta & \gamma
\end{bmatrix},
\]
where $\alpha,\beta,$ and $\gamma$ are to be optimized through LFA.
Obviously, the special case $\Upsilon(\alpha,\beta,0)$ with $\gamma=0$ reduces to the above 5-point stencil and hence we expect a smaller optimal smoothing factor with the free choice of $\gamma$. The symbol of $\Upsilon$ reads
\[\widetilde{\Upsilon}(\bm\theta)=h^2(\alpha+2\beta\cos\theta_1+2\beta\cos\theta_2+4\gamma\cos\theta_1\cos\theta_2).\]
Based on the idea of domain decomposition method, an  element-wise additive Vanka smoother corresponding to the above 9-point stencil was proposed recently in  \cite{CH2021addVanka}
\begin{equation}
	M_{\mathrm{Vanka}} = \frac{h^2}{96}\begin{bmatrix}
	1	& 4 & 1  \\
	4 & 28& 4 \\
	1	& 4 & 1
	\end{bmatrix}=\frac{1}{96}\Upsilon(28,4,1),
\end{equation}
 with $\widetilde{M}_{\mathrm{Vanka}}=(28+8\cos\theta_1+8\cos\theta_2+4\cos\theta_1\cos\theta_2)/96$, which gives the optimal smoothing factor $\mu_{\opt}=7/25=0.28$ with $\omega_{\opt}=24/25$.
Though with better performance than weighted Jacobi smoother, such a Vanka 9-point stencil smoother $M_{\mathrm{Vanka}}$ can be greatly improved to attain a smaller optimal smoothing factor than the above best 5-point SPAI smoother $M_5$, which is one of our major contribution.

To find the best 9-point stencil  $\Upsilon(\alpha,\beta,\gamma)$ that achieves the optimal smoother factor $\mu_{\rm opt}$, we essentially need to solve the following min-max optimization problem
\bean
&&\mu_{\rm opt}=\min_{\alpha,\beta,\gamma,\omega}\mu_{\rm loc} =\min_{\alpha,\beta,\gamma,\omega} \max_{\boldsymbol{\theta} \in T^{\rm{H}}}   | 1-\omega \widetilde{A}_h(\bm\theta) \widetilde{\Upsilon}(\bm\theta)|\\
&&=\min_{\alpha,\beta,\gamma,\omega} \max_{\boldsymbol{\theta} \in T^{\rm{H}}}   | 1-\omega(2\alpha+4\beta(\cos\theta_1+\cos\theta_2)+8\gamma\cos\theta_1\cos\theta_2) (2-\cos \theta_1 -\cos \theta_2) |\nonumber\\
&&=\min_{a,b ,\omega} \max_{\boldsymbol{\theta} \in T^{\rm{H}}}   | 1-\omega(b+a(\cos\theta_1+\cos\theta_2)+\cos\theta_1\cos\theta_2) (2-\cos \theta_1 -\cos \theta_2)
\eean 
where we fixed $8\gamma=1$ and set $b=2\alpha$ and $a=4\beta $ to simplify the discussion. Notice that the normalization condition $8\gamma=1$ only changes the choice of $\omega_{\opt}$ and it will not affect $\mu_{\opt}$ since the ratio $\lambda_1/\lambda_0$ remains the same.
Based on some technical analysis as stated in the following theorem,
we can obtain the following best 9-point SPAI smoother  $M_9$
that achieves the optimal smoothing factor
\begin{equation}
M_9 =\frac{1}{24}\Upsilon(44,10,3)= \frac{h^2}{24}\begin{bmatrix}
		3	& 10 & 3  \\
		10 & 44& 10 \\
		3	& 10 & 3
	\end{bmatrix}.
\end{equation}

\begin{theorem} \label{Thm_SAI92D}
Among all symmetric $9$-point stencil of form $\Upsilon(\alpha,\beta,\gamma)$, the SPAI smoother $M_9=\frac{1}{24}\Upsilon(44,10,3)$ gives the optimal smoothing factor \[\mu_{\opt}={(9+8\sqrt{10})}/{215}\approx 0.1595\]
with  the optimal relaxation parameter \[\omega_{\opt}={309-12\sqrt{10}\over1720}\approx 0.1576.\]
\end{theorem}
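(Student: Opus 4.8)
The plan is to collapse the three-parameter min--max into a one-variable optimization and then invoke the ratio criterion \eqref{mu_opt}--\eqref{omega_opt}. Setting $x=\cos\theta_1$ and $y=\cos\theta_2$, both lie in $[-1,1]$, and one checks that the high-frequency set $T^{\rm H}$ is exactly $\{(x,y):\min(x,y)\le 0\}$ (the low frequencies being where both $\theta_i\in[-\tfrac{\pi}{2},\tfrac{\pi}{2})$, i.e.\ $x,y>0$). With the normalization $8\gamma=1$, $b=2\alpha$, $a=4\beta$ already adopted in the excerpt, the symbol product equals
\[
f(x,y)=\bigl(b+a(x+y)+xy\bigr)(2-x-y),
\]
so by \eqref{mu_opt} it suffices to pick $a,b$ minimizing $\lambda_1/\lambda_0$, where $\lambda_0=\min_{T^{\rm H}}f$ and $\lambda_1=\max_{T^{\rm H}}f$, after which $\omega_{\opt}$ is read off from \eqref{omega_opt}. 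I would first confirm that the candidate $\lambda_0$ is strictly positive, so that the scalar estimate \eqref{mu_opt} genuinely applies.

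First I would pass to the symmetric coordinates $s=x+y$ and $p=xy$. Since $2-s>0$ on all of $T^{\rm H}$, for each fixed $s$ the function $p\mapsto f$ is affine and increasing, hence its extrema over the admissible $p$-interval occur at the endpoints. A direct description of the feasible region shows that $T^{\rm H}$ is $\{s\in[-2,1]\}$ with $p$ between the lower envelope $p=|s|-1$ (one coordinate at $\pm1$) and the upper envelope equal to $p=s^2/4$ for $s\le0$ (the diagonal $x=y$) and to $p=0$ for $0\le s\le1$ (the interface with $T^{\rm L}$). Thus $\lambda_1=\max\{\max_{s\in[-2,0]}(b+as+\tfrac{s^2}{4})(2-s),\ \max_{s\in[0,1]}(b+as)(2-s)\}$ and $\lambda_0$ is the minimum of the two analogous expressions along the lower envelope, reducing everything to explicit cubics in the single variable $s$.

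Next I would solve these scalar problems. Differentiating the maximizing branch on $[-2,0]$ yields the critical equation $9s^2+28s+4=0$, whose admissible root $s^\star=(-14+4\sqrt{10})/9$ is the origin of the surd; evaluating there gives $\lambda_1=(4352+320\sqrt{10})/729$, while the competing branch on $[0,1]$ is monotone and smaller. The minimizing branches instead attain their values at the corners $s=-2,0,1$, namely the points $(x,y)=(-1,-1),(1,-1),(1,0)$, with $f(-1,-1)=4(b-2a+1)$, $f(1,-1)=2(b-1)$, $f(1,0)=a+b$. Optimality over $(a,b)$ is then pinned down by equioscillation of the minimum: requiring these three corner values to coincide gives $a=b-2$ and $4a-b=3$, hence $a=5/3$, $b=11/3$, i.e.\ the stencil $M_9=\tfrac1{24}\Upsilon(44,10,3)$, and the common value $\lambda_0=16/3$. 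Substituting $\lambda_0=16/3$ and $\lambda_1=(4352+320\sqrt{10})/729$ into \eqref{mu_opt} and \eqref{omega_opt} and simplifying produces $\mu_{\opt}=(9+8\sqrt{10})/215$ and $\omega_{\opt}=(309-12\sqrt{10})/1720$.

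The main obstacle is the optimization over $(a,b)$ in the last step: one must prove that the equioscillating configuration is a genuine global minimizer of $\lambda_1/\lambda_0$, not merely a balanced candidate. I expect this to require a perturbation argument showing that no admissible change of $(a,b)$ can simultaneously raise the triple-point minimum and control the maximum so as to decrease the ratio, together with a careful verification that the active branches are the ones claimed (max on the diagonal, min at the three corners) and that globally $\lambda_0\le f\le\lambda_1$ on $T^{\rm H}$. Handling the degenerate and sign cases of the normalization $8\gamma=1$ (including why the optimal $\gamma$ is nonzero, so the genuine $9$-point stencil beats the best $5$-point one) is a minor but necessary bookkeeping step.
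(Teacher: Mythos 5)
Your reduction is sound and in fact coincides with the paper's: the passage to $(s,p)=(x_1+x_2,\,x_1x_2)$ with lower envelope $p=|s|-1$ and upper envelope $p=s^2/4$ (for $s\le0$) or $p=0$ (for $s\in[0,1]$) is exactly the paper's bounding of $f_{a,b}$ between the one-variable functions $u_{a,b},U_{a,b}$ on $X_1$ and $v_{a,b},V_{a,b}$ on $X_2$; your critical equation $9s^2+28s+4=0$, the corner values $b+a$, $2b-2$, $4(b-2a+1)$, and the final numbers $\lambda_0=16/3$, $\lambda_1=(4352+320\sqrt{10})/729$ all check out and match the paper's $m_{5/3,11/3}$ and $\chi_{5/3,11/3}$. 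But there is a genuine gap, and you have named it yourself: the equioscillation condition $b+a=2b-2=4(b-2a+1)$ only \emph{produces} the candidate $(a,b)=(5/3,11/3)$; it does not prove that this candidate globally minimizes $\lambda_1/\lambda_0$ over all admissible $(a,b)$, which is precisely what the word ``optimal'' in the theorem asserts. Equioscillation of the three minimum branches is a heuristic necessary-type condition, not a sufficient one here: depending on $(a,b)$ the maximum can sit on four different branches --- the interior critical point $t_-$ of $V_{a,b}$, the endpoint values $2b$ or $4(b-2a+1)$, or (when $b\le2a$) the value $(b+2a)^2/(4a)$ coming from the other envelope --- and one must exclude every configuration in which fewer than three minimum branches are active or the maximum migrates to another branch.

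This global verification is where essentially all of the paper's work lies. The paper partitions the admissible set $R=\{(a,b):m_{a,b}>0\}$ into $R_0$ ($b\le2a$), $R_1\cup R_2$ (maximum attained at an endpoint), and $R_3$ (maximum at $t_-\in[0,2]$), proves the crude bounds $J(a,b)\ge1/5$ on $R_0$ and $J(a,b)\ge3/17$ on $R_1\cup R_2$ by elementary case analysis, and then on $R_3$ eliminates $b$ via $V_{a,b}'(t_-)=0$ and bounds the two-variable ratio $w(t,a)/W(t,a)$ using monotonicity of M\"obius transforms in $a$ and the threshold $t_0=(14-4\sqrt{10})/9$, arriving at $r_{a,b}\le(68-5\sqrt{10})/72$ and hence $J\ge(9+8\sqrt{10})/215$ throughout $R$. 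A local ``perturbation argument'' of the kind you sketch would at best certify a local minimizer: since $J(a,b)$ is only piecewise smooth (the active branches of $m_{a,b}$ and $\chi_{a,b}$ switch across curves in the $(a,b)$-plane), local analysis cannot rule out a better ratio in a different branch configuration, so some exhaustive regionwise bound like the paper's appears unavoidable. Your remaining bookkeeping items (positivity of $\lambda_0$, the normalization $8\gamma=1$ and why $\gamma\neq0$ at the optimum) are fine and are handled the same way in the paper.
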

\begin{proof}
For better exposition, the rather technical proof is given in Appendix A.	
\end{proof}
Compared to  $M_5$ (with $\mu_{\opt}\approx 0.2195$),
the proposed
$M_9$ (with $\mu_{\opt}\approx 0.1595$) reduces $\mu_{\opt}$ by about 30\%, which hence leads to a faster multigrid convergence rate. Clearly, $M_9$ also provides much faster convergence rate than $M_{\mathrm{Vanka}}$ with the same cost.
 {Recall the 2D pointwise lexicographic Gauss-Seidel smoother only gives $\mu_{\opt}=0.5$ \cite{trottenberg2000multigrid,Hocking2011}.}

It is worthwhile to point out that the LFA technique can be applied to other discretization to identify  optimal smoother.
For example, the authors in \cite{Bolten2016} studied a 9-point stencil arising from linear finite element method for the 2D Poisson equation
\[
\widehat A_h = \frac{1}{h^2}\begin{bmatrix}
	-1& -1 &  -1 \\
	-1 & 8& -1 \\
	-1& -1 &  -1
\end{bmatrix},
\]
where the best SPAI smoother with a 9-point stencil (of the reduced form $\Upsilon(\alpha,\beta,\beta)$)
\[
\widehat M_9=\frac{4h^2}{153}\begin{bmatrix}
	1& 1 &  1\\
	1 & 10& 1 \\
	1& 1 & 1
\end{bmatrix}
\]
gives the optimal smoothing factor $\mu_{\opt}=1/17\approx 0.0588$ with $\omega_{\opt}=1$. 
We emphasis that it is technically more difficult to {theoretically} find $M_9$ than both $M_5$ and $\widehat M_9$, since the product symbol $\widetilde{M_9}(\bm\theta)\widetilde{A_h}(\bm\theta)$ is more complicated than both $\widetilde{M_5}(\bm\theta)\widetilde{A_h}(\bm\theta)$ and $\widetilde{\widehat M_9}(\bm\theta)\widetilde{\widehat A_h}(\bm\theta)$ due to mis-matched stencil patterns (5-point verse 9-point). {However, for given structured smoother, one can apply some  robust optimization approaches, for example, \cite{brown2021tuning}, to numerically identify the (approximate) optimal smoother.}
\subsection{3D case}

For the 3D 7-point stencil $A_h$ given in (\ref{A_h}), its symbol reads
\begin{equation}\label{eq:symbol-Lh3D}
	\widetilde{A}_h(\bm\theta)=\frac{2}{h^2}(3-(\cos \theta_1 +\cos \theta_2+\cos \theta_3)).
\end{equation}
The simplest Jacobi smoother is $M_J=(h^2/6)I_h$ with $\widetilde{M}_h(\bm\theta)=h^2/6$ and it achieves a quite large  $\mu_{\opt}=5/7\approx 0.714$ with the optimal damping parameter $\omega_{\opt}=6/7$ \cite{trottenberg2000multigrid}.

Inspired by $M_5$, we look at all 7-point stencil SPAI smoothers of the form
\begin{equation}
	\mathcal{T}(\alpha,\beta):=  \frac{h^2}{4} \begin{Bmatrix}
		\begin{bmatrix}
			& 0 &   \\
			0 & \beta& 0 \\
			& 0 &
		\end{bmatrix}
		\begin{bmatrix}
			& \beta &   \\
			\beta & 2\alpha& \beta \\
			& \beta &
		\end{bmatrix}
		\begin{bmatrix}
			& 0 &   \\
			0 & \beta& 0 \\
			& 0 &
		\end{bmatrix}
	\end{Bmatrix}
\end{equation}
with a symbol $\widetilde{\mathcal{T}}(\bm\theta)= (h^2/4)(2\alpha+2\beta(\cos\theta_1+\cos\theta_2+\cos\theta_3))$.
Hence there holds
\[
\widetilde{\mathcal{T}}(\bm\theta)\widetilde{A}_h(\bm\theta)=
\left(\alpha+\beta(\cos\theta_1+\cos\theta_2+\cos\theta_3)\right) \left(3-(\cos \theta_1 +\cos \theta_2+\cos \theta_3)\right).
\]
Following the techniques in \cite{Bolten2016}, we can find the following best 7-point SPAI smoother
\begin{equation}
	M_{\mathrm{7}}= \frac{h^2}{10}\begin{Bmatrix}
		\begin{bmatrix}
			& 0 &   \\
			0 & 1& 0 \\
			& 0 &
		\end{bmatrix}
		\begin{bmatrix}
			& 1 &   \\
			1 & 8& 1 \\
			& 1 &
		\end{bmatrix}
		\begin{bmatrix}
			& 0 &   \\
			0 & 1& 0 \\
			& 0 &
		\end{bmatrix}
	\end{Bmatrix}=\mathcal{T}(8/5,2/5),
\end{equation}
which gives the   optimal smoothing factor as stated in the following theorem.
\begin{theorem}
	Among all symmetric $7$-point stencil of form $\mathcal{T}(\alpha,\beta)$, the SPAI smoother $M_7=\mathcal{T}(8/5,2/5)$ gives the  optimal smoothing factor \[\mu_{\opt}=25/73\approx 0.343\]
	with the optimal relaxation parameter \[\omega_{\opt}=20/73\approx 0.274.\]
\end{theorem}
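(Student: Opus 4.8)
The plan is to exploit the fact that, unlike the 2D nine-point case, the product symbol here depends on $\theta_1,\theta_2,\theta_3$ only through the single quantity $c:=\cos\theta_1+\cos\theta_2+\cos\theta_3$. Indeed, from the displayed formula for $\widetilde{\mathcal{T}}(\bm\theta)\widetilde{A}_h(\bm\theta)$ we have
\[
\widetilde{\mathcal{T}}(\bm\theta)\widetilde{A}_h(\bm\theta)=(\alpha+\beta c)(3-c)=:g(c),
\]
a genuinely one-variable function. Since a common scaling of $(\alpha,\beta)$ rescales only $\omega_{\opt}$ and leaves $\mu_{\opt}$ invariant (as observed in Section 2), I would normalize $\beta>0$ and treat $t:=\alpha/\beta$ as the single shape parameter, writing $g(c)=\beta(t+c)(3-c)$.

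First I would pin down the exact range of $c$ over the high-frequency set $T^{\rm H}$. Writing $x_i=\cos\theta_i$, a point lies in $T^{\rm H}$ iff at least one $\theta_i\in[\pi/2,3\pi/2)$, equivalently at least one $x_i\le 0$, while the remaining coordinates satisfy $x_j\in[0,1]$. Hence $c$ is minimized at $x_1=x_2=x_3=-1$ (i.e.\ $\bm\theta=(\pi,\pi,\pi)$), giving $c=-3$, and maximized by taking one $x_i=0$ and the other two equal to $1$ (e.g.\ $\bm\theta=(\pi/2,0,0)$), giving $c=2$; by continuity $c$ sweeps the whole interval $[-3,2]$. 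This is the one genuinely case-specific step and the place where the high-/low-frequency split enters: the upper limit is $2$ rather than $3$ precisely because all-positive cosines would force a low-frequency point.

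With $c\in[-3,2]$ fixed, the theorem reduces to the one-dimensional min-max problem of choosing $t$ to minimize $\lambda_1/\lambda_0$, where $\lambda_0=\min_{c\in[-3,2]}g(c)$ and $\lambda_1=\max_{c\in[-3,2]}g(c)$; the optimal values then follow from \eqref{mu_opt}--\eqref{omega_opt}. For $\beta>0$, $g$ is a concave (downward) parabola in $c$ with vertex $c^\star=(3-t)/2$; when $c^\star\in[-3,2]$ (which holds for $-1\le t\le 9$) the maximum is attained at the vertex, $\lambda_1=\beta(t+3)^2/4$, while the minimum is attained at an endpoint, $\lambda_0=\beta\min\{6(t-3),\,t+2\}$, with positivity of $\lambda_0$ (needed for convergence) forcing $t>3$. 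The two endpoint values coincide exactly at $t=4$, where $6(t-3)=t+2=6$; I would then differentiate $R(t):=\lambda_1/\lambda_0$ on each side of $t=4$ and check that $R$ decreases on $(3,4]$ and increases on $[4,9]$, so this equioscillation point $t=4$ is the global minimizer. Since $t=4$ corresponds to $(\alpha,\beta)=(8/5,2/5)$, this identifies $M_7=\mathcal{T}(8/5,2/5)$.

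Finally I would substitute back: at $t=4$ one gets $\lambda_0=12/5$ (attained at both endpoints $c=-3$ and $c=2$) and $\lambda_1=49/10$ (attained at $c=-1/2$), whence \eqref{mu_opt} gives $\mu_{\opt}=(\lambda_1-\lambda_0)/(\lambda_1+\lambda_0)=25/73$ and \eqref{omega_opt} gives $\omega_{\opt}=2/(\lambda_0+\lambda_1)=20/73$. I expect the only real obstacle to be the careful justification of the range $c\in[-3,2]$ and the attendant endpoint bookkeeping in the min-max; once the problem is collapsed to the single variable $c$, the remaining optimization is an elementary calculus exercise. This is exactly why the 3D seven-point case is considerably simpler than the 2D nine-point result of Theorem~\ref{Thm_SAI92D}, whose symbol also involves the cross term $\cos\theta_1\cos\theta_2$ and therefore cannot be reduced to a single variable.
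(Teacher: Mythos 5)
Your proposal follows essentially the same route as the paper's own proof: the paper likewise normalizes $\beta=2/5$, collapses the product symbol to the one-variable concave parabola $V_a(t)=(a+t)(3-t)$ with $t=x_1+x_2+x_3\in[-3,2]$ over $X^{\rm H}=[-1,1]^3\setminus(0,1]^3$, obtains $m_a=\min\{6a-18,\,a+2\}$ and $\chi_a=(a+3)^2/4$ for $3<a\le 9$, and identifies the optimum at the equioscillation point $a=4$ (your $t=4$), giving $\mu_{\opt}=25/73$ and $\omega_{\opt}=20/73$ exactly as you compute. The only loose end is that you assert global optimality after examining only $t\in(3,9]$, omitting the regime $t\ge 9$ where the vertex $(3-t)/2$ leaves $[-3,2]$ and the maximum moves to the endpoint $c=-3$; the paper covers this as its third case via $r_a=(a+2)/(6a-18)\le 11/36<24/49$, and in your notation $R(t)=6(t-3)/(t+2)$ is increasing there with $R(9)=36/11>49/24$, so the patch is immediate and your conclusion stands.
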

\begin{proof}
 By normalization, we assume without loss of generality that $\beta=2/5$ and $\alpha=a\beta$.
Let $x_i=\cos\th_i$ with $i=1,2,3$, and transform the high frequency variable $\bm\theta\in T^{\rm{H}}$ to $(x_1,x_2,x_3)\in X^{\rm{H}}:=[-1,1]^3\setminus(0,1]^3$. {By standard calculation, we have
\[\mu_{\rm loc}=\max_{(x_1,x_2,x_3)\in X^H}|1-2/5\cdot\om f_a(x_1,x_2,x_3)|,\] }
where
\[f_a(x_1,x_2,x_3):=(a+x_1+x_2+x_3)(3-x_1-x_2-x_3).\]
Denote the extreme values of $f_a$ as
\[\chi_a:=\max_{(x_1,x_2,x_3)\in X^{\rm{H}}}f_a(x_1,x_2,x_3),~~m_a:=\min_{(x_1,x_2,x_3)\in X^{\rm{H}}}f_a(x_1,x_2,x_3).\]
{From \eqref{mu_opt}, to obtain the optimal smoothing factor, we  only need to maximize the ratio $r_a:=m_a/\chi_a$.}

We restrict $a$ in the region $R:=\{a\in\R:~m_a>0\}$ to guarantee the convergence of the relaxation scheme.
Note that $f_a(x_1,x_2,x_3)=V_a(t):=(a+t)(3-t)$ with $t:=x_1+x_2+x_3\in[-3,2]$.
Using $f_a>0$, we have $a>3$. Since $V_a(t)$ is a concave function in $t$ (i.e., $V_a''(t)<0$) with a unique critical point at $t=(3-a)/2$, we obtain
\[m_a=\min\{V_a(-3),V_a(2)\}=\min\{6a-18,a+2\},\]
and
\[
	\chi_a=\begin{cases}
		(a+3)^2/4,&~3<a\le 9,\\
		6a-18,&~a\ge 9.
	\end{cases}
\]

If $3<a\le4$, then
$r_a={6a-18\over (a+3)^2/4}\le{24\over49}.$
If $4\le a\le 9$, then
$r_a={a+2\over (a+3)^2/4}\le{24\over49}.$
If $a\ge9$, then
$r_a={a+2\over6a-18}\le{11\over36}.$
A combination of the above three cases gives $r_a\le r_4=24/49$ with $m_4=6$ and $\chi_4=49/4$.
{Hence, from \eqref{mu_opt} we obtain
\[\mu_{\opt}={\chi_4-m_4\over \chi_4+m_4}={25\over73}\approx 0.343\] }
with optimal $\om_{\opt}$ satisfied $\beta\om_{\opt}={2/(\chi_4+m_4)}$, where $\beta=\frac{2}{5}$, that is
\[ \om_{\opt}={2\over \beta(\chi_4+m_4)}=\frac{5}{2}{8\over73}={20\over73}\approx 0.274.\]

\end{proof}
Similar to the above discussed 2D cases (from $M_5$ to $M_9$), one can obtain a smaller optimal smoothing factor if
 a wider/denser (e.g. 19-point or 27-point) symmetric stencil is used to construct the SPAI smoother for 3D problem.
Nevertheless, the resulting product symbol becomes much more complicated , which is overwhelmingly tedious to optimize analytically.
In such situations, one may resort to some robust optimization approaches (based on the same min-max formulation), see e.g. \cite{brown2021tuning}.
Although further discussion is beyond the scope of this paper, we numerically verified such heuristically optimized 19-point or 27-point SPAI smoothers indeed deliver slightly faster convergence rates than that of $M_7$.
 {We also mention the 3D pointwise lexicographic Gauss-Seidel smoother only attains $\mu_{\opt}=(4+\sqrt{5})/11\approx 0.567$ \cite{Hocking2011}, which is much larger than that of the above optimized SPAI smoother $M_7$.}

\section{Numerical results}
In this section, we present some numerical tests to illustrate the effectiveness of our proposed multigrid algorithms. All simulations are  implemented with MATLAB on {a Dell Precision 5820 Workstation with Intel(R) Core(TM) i9-10900X CPU@3.70GHz and 64GB RAM},
where the CPU times (in seconds) are estimated by the timing functions \texttt{tic/toc}.
In our multigrid algorithms, we use the coarse operator from re-discretization with a coarse mesh step size $H=2 h$, full weighting restriction and linear interpolation operators, W or V cycle with $\nu_1$-pre and $\nu_2$-post smoothing iteration, the coarsest mesh step size $h_0=1/4$, and the stopping tolerance $tol=10^{-10}$ based on reduction in relative residual norms.
We will test both $W(\nu_1+\nu_2)=W(1+0)$ and $V(\nu_1+\nu_2)=V(1+1)$ cycles in our numerical examples.
The initial guess $u_h^{(0)}$ is chosen as uniformly distributed random numbers in $(0,1)$.
The multigrid convergence rate (factor) of $k$-th iteration  is computed as \cite{trottenberg2000multigrid}
\eq
\widehat\rho^{(k)}=\left( \|r_k\|_2/\|r_0\|_2 \right)^{1/k},
\ee
where $r_k=b_h-A_h u_h^{(k)}$ denotes the residual vector after the $k$-th multigrid iteration. We will report  $\widehat\rho^{(k)}$ of the last multigrid iteration  as the actual convergence rate.
The MATLAB codes for reproducing the following figures are available online at the link:
\url{https://github.com/junliu2050/SPAI-MG-Laplacian}.

Let  $\nu=\nu_1+\nu_2$ be the total number of smoothing steps in each multigrid cycle.
In practice, the LFA smoothing factor often offers a sharp prediction of LFA two-grid convergence factor $\rho_h$ and actual two-grid performance, which also predicts the W-cycle multigrid convergence rate \cite{wienands2004practical,trottenberg2000multigrid}.
In  Table \ref{tab:mu-rho-results-256}, we numerically optimize the LFA two-grid convergence factor $\rho_h(\nu=1)$  with respect to the relaxation parameter $\omega\in(0,1]$, and then use the numerically obtained optimal parameter $\omega^{TG}_{\rm opt}$ to compute  the corresponding smoothing factor $\mu(\omega^{TG}_{\rm opt})$, and $\rho_h(\nu)$ as a function of increasing $\nu=2,3,4$. We observe that two-grid LFA convergence factor $\rho_h(\nu=1)$ is the same as the LFA smoothing factor $\mu(\omega^{TG}_{\rm opt})$, and the approximately optimal $\omega^{TG}_{\rm opt}$ and $\rho_h(\nu=1)$ match with  our theoretical  smoothing analysis, $\omega_{\rm opt}$ and $\mu_{\rm opt}$,  respectively.  As compared in Table \ref{tab:mu-rho-results-256}, we also include the damped Jacobi smoother $M_J$ and the SPAI smoother  $M_5$.
Both our proposed SPAI smoothers $M_9$ and $M_7$ significantly outperform the Jacobi smoother $M_J$, which are also confirmed by the following several 2D and 3D numerical examples.

	\begin{table}[H]
	\caption{LFA predicted two-grid convergence factor $\rho_h(\nu)$  using  $\omega^{TG}_{\opt}$   obtained from numerically minimizing two-grid LFA convergence factor $\rho_h(\nu=1)$ and the corresponding LFA  smoothing factor $\mu(\omega^{TG}_{\rm opt})$ with $h=\frac{1}{256}$ (for 2D) and $h=\frac{1}{64}$ (for 3D).}
	\centering
	\begin{tabular}{|c|l||c|c||cccc|}
		\hline
		&&$\omega^{TG}_{\rm opt}$  & $\mu(\omega^{TG}_{\rm opt})$  &{$\rho_h(\nu=1)$}  & {$\rho_h(\nu=2)$ }  &{$\rho_h(\nu=3)$}   & {$\rho_h(\nu=4)$}  \\ \hline
			\multirow{3}{*}{$2D$}
		 &{$M_J$}   &0.800        &0.600      &0.600       &0.360      & 0.216   & 0.137   \\
		 &{$M_5$}   &0.250        &0.220     &  0.220   &   0.087     & 0.056  &  0.044 \\
		&	 {$M_9$}   &  0.158       &0.160     &0.160     & 0.070     &  0.046  & 0.035\\ \hline
			\multirow{2}{*}{$3D$}
			&{$M_J$}   &    0.857    &  0.714     & 0.714       &  0.510  &   0.364  &  0.260  \\
			&{$M_7$}   &   0.274     &  0.343   &  0.343   &   0.152     &  0.107 &  0.085  \\
		\hline
	\end{tabular}\label{tab:mu-rho-results-256}
\end{table}

\subsection{Example 1 \cite{Briggs2000}}
In the first example we consider the following data
\[
u=(x^2-x^4)(y^4-y^2),\  f=2(1-6x^2)(y^2-y^4)+2(1-6y^2)(x^2-x^4),\ g=0.
\]
In Fig. \ref{fig_MGplot_2D_Ex1_WV}, we compare the multigrid convergence performance of our considered three  SPAI-type smoothers: $M_J$, $M_5$,
 and $M_9$, where the estimated convergence rates match with the LFA preconditions shown in Table \ref{tab:mu-rho-results-256}.  Moreover,  Fig. \ref{fig_MGplot_2D_Ex1_WV} reveals  that using V-cycle is much cheaper than W-cycle. Clearly, both SPAI smoothers $M_5$ and $M_9$ attain significantly faster convergence rates and also cost less CPU times than the Jacobi smoother $M_J$.
In serial computation, we only observe marginal speed up in CPU times for $M_9$ over $M_5$, since $M_9$ has a wider stencil and higher operation cost in each iteration.
But we expect to achieve even more significant speedup in parallel computation since SPAI smoothers are embarrassingly parallelizable and the parallel CPU times will be mainly determined by the required sequential iteration numbers.

\begin{figure}[htp!]
	\centering
	\includegraphics[width=0.49\textwidth]{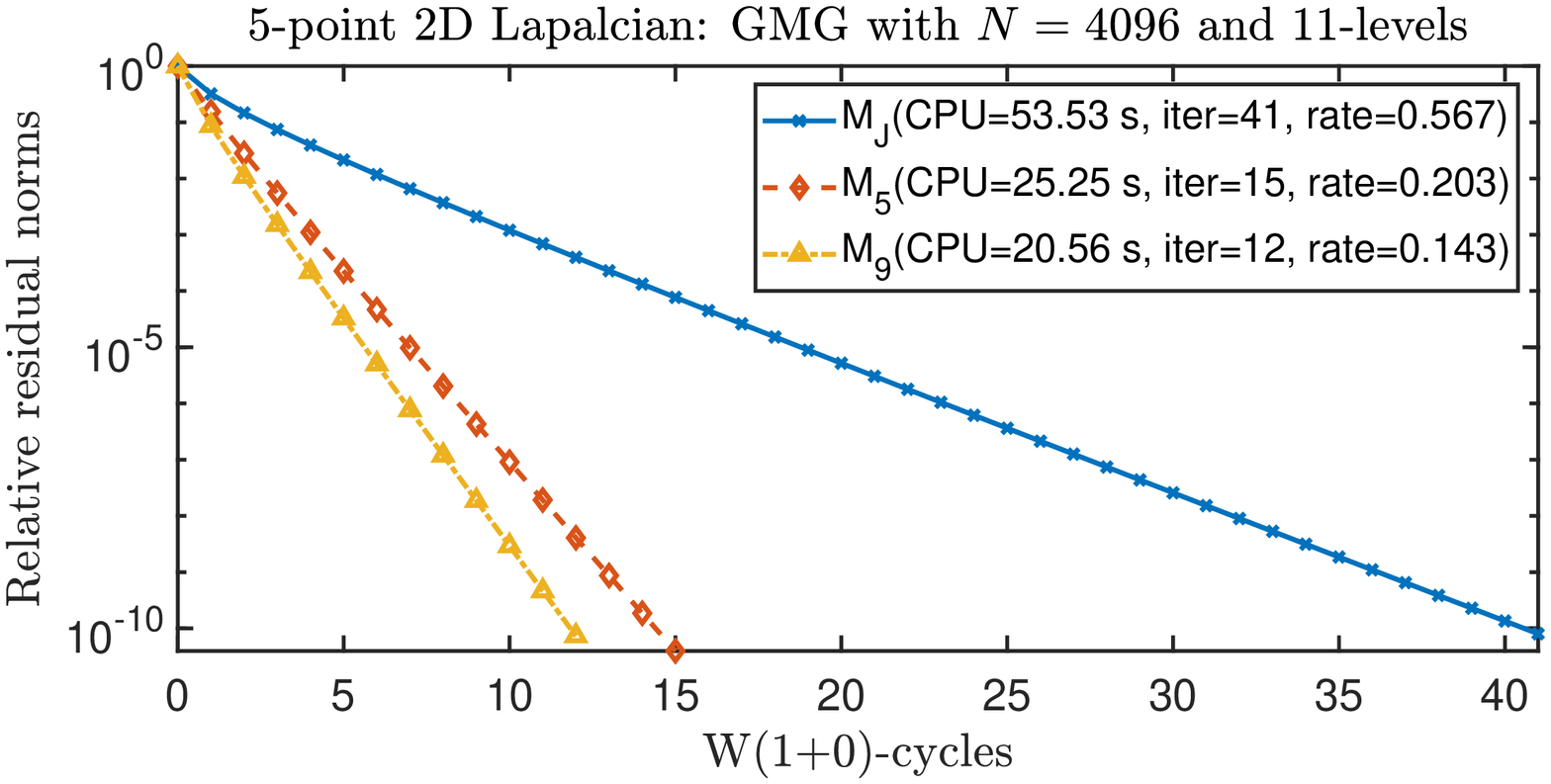}
	\includegraphics[width=0.49\textwidth]{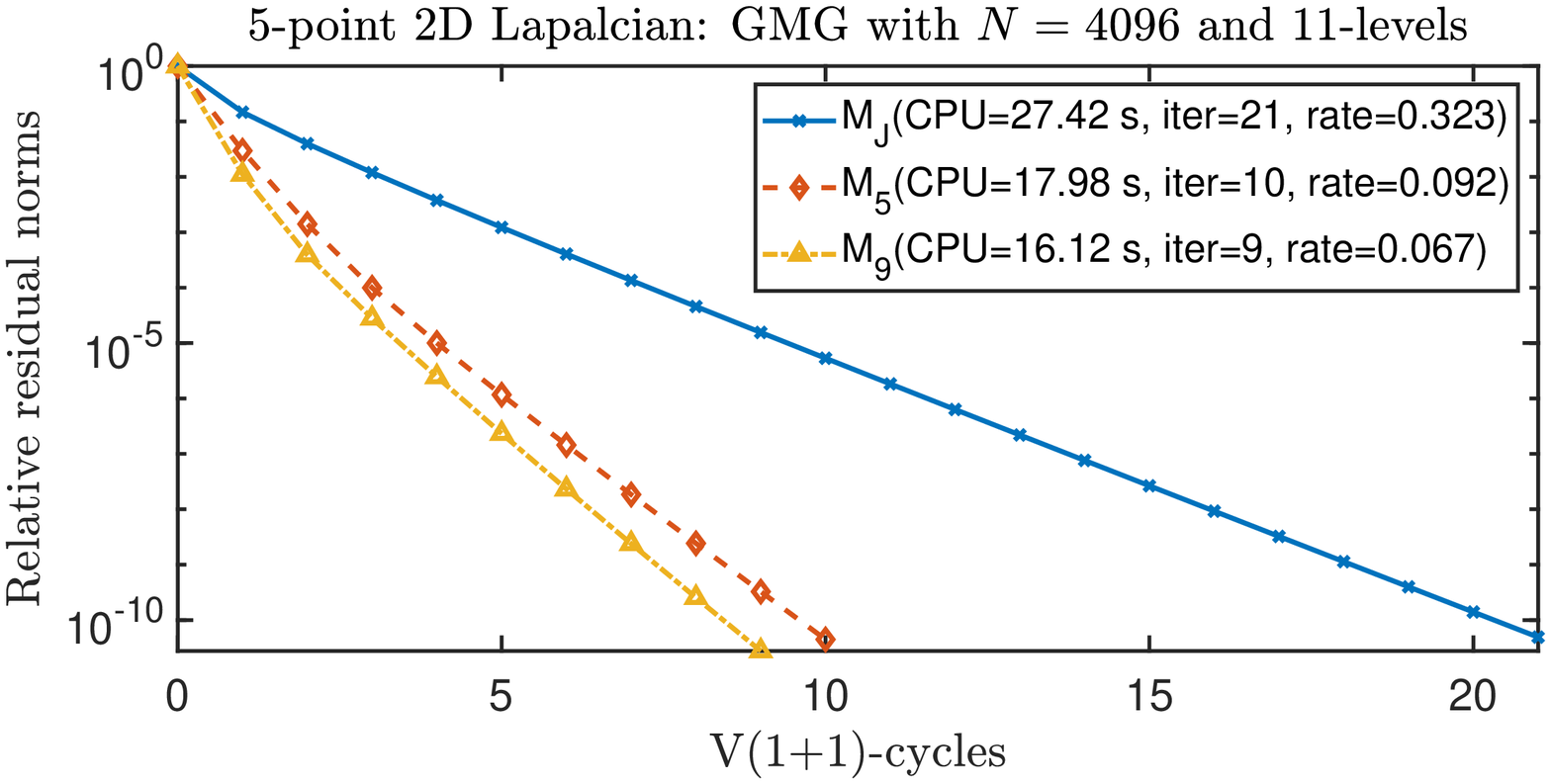}
	\caption{Example 1: comparison of  multigrid convergence with different smoothers. Left: W(1+0)-cycle. Right: V(1+1)-cycle.} \label{fig_MGplot_2D_Ex1_WV}
\end{figure}

\subsection{Example 2}
In the second example we consider the following data
\[
u=x\ln(x)y\ln(y),\  f=-x\ln(x)/y-y\ln(y)/x,\ g=0,
\]
where $f$ has singularity near the boundary with $x=0$ or $y=0$.
In Fig. \ref{fig_MGplot_2D_Ex2_WV}, we compare the multigrid convergence performance of our considered three SPAI-type smoothers: $M_J$, $M_5$,
and $M_9$, where the observed convergence rates are the same as those reported in Example 1. Again, we see V-cycle multigrid is more efficient than W-cycle multigrid.
This example shows that the convergence rates of SPAI smoothers are not obviously influenced by the lower regularity of the given source term $f$. 

\begin{figure}[htp!]
	\centering
	\includegraphics[width=0.49\textwidth]{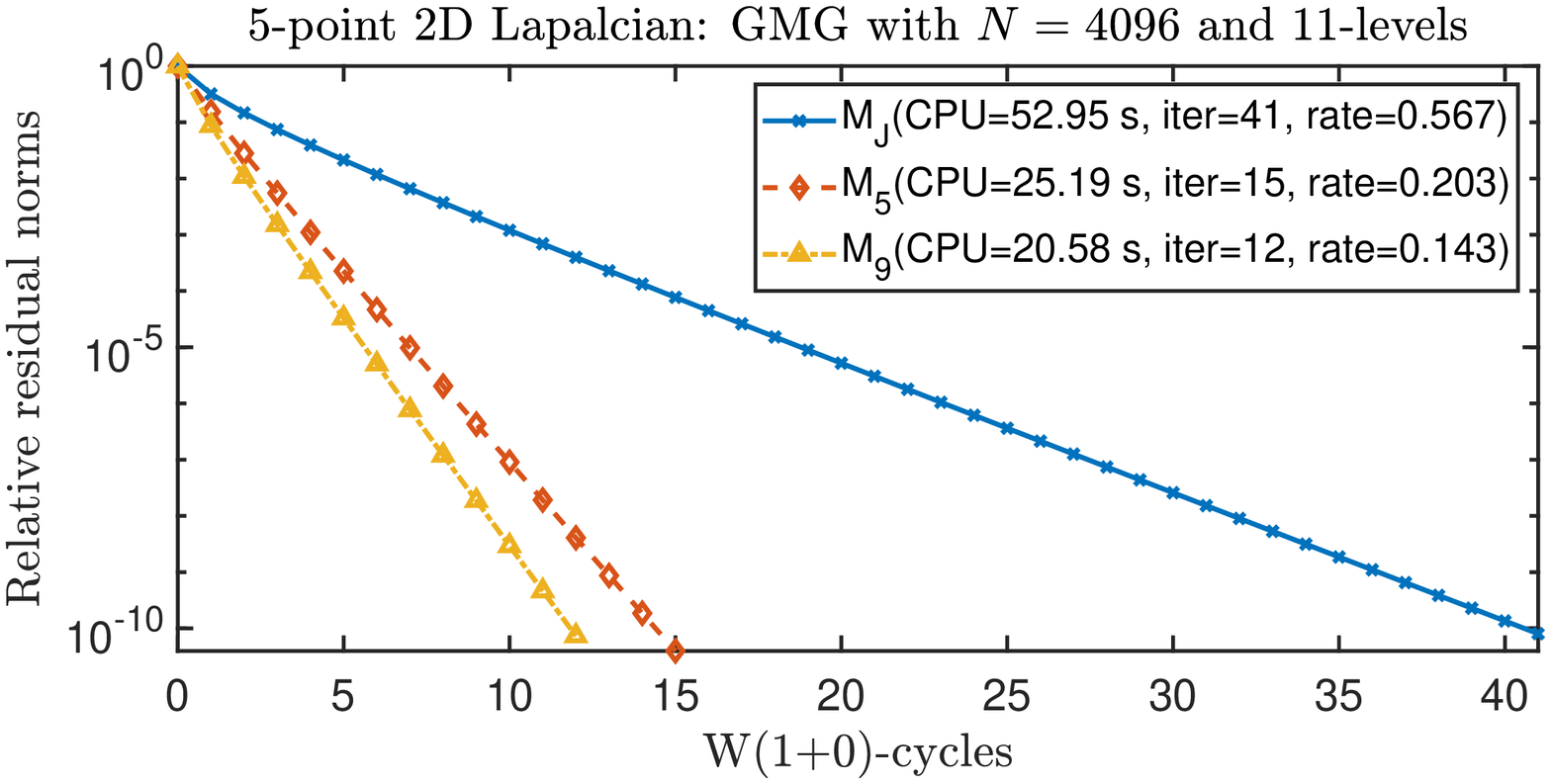}
	\includegraphics[width=0.49\textwidth]{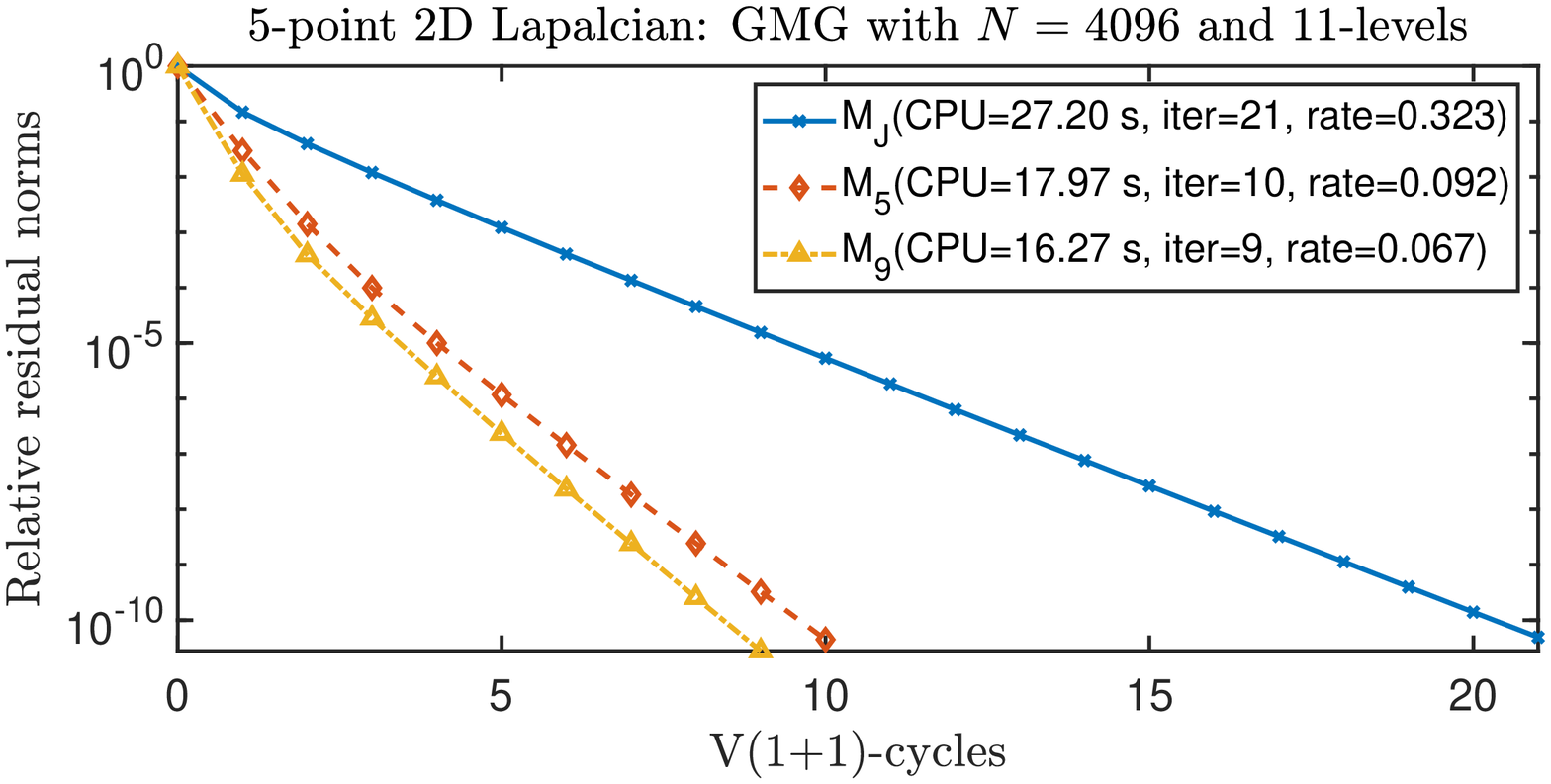}
	\caption{Example 2: comparison of  multigrid convergence with different smoothers. Left: W(1+0)-cycle. Right: V(1+1)-cycle.} \label{fig_MGplot_2D_Ex2_WV}
\end{figure}

\subsection{Example 3 \cite{zhang1998fast}}
In the third example, we consider the following 3D data
\[
u=\sin(\pi x)\sin(\pi y)\sin(\pi z),\ f=3\pi^2\sin(\pi x)\sin(\pi y)\sin(\pi z),\ g=0.
\]
In Fig. \ref{fig_MGplot_3D_WV}, we compare the multigrid convergence performance of  the two SPAI-type smoothers: $M_J$ and $M_7$, where the observed convergence rates are compatible with the LFA predictions presented in   Table \ref{tab:mu-rho-results-256}.
For both W and V cycles,  $M_7$ takes about half of the CPU times by $M_J$, as predicted by Table \ref{tab:mu-rho-results-256}. For all smoothers, we see that V-cycle is more efficient that W-cycle. We highlight that with $N=512$ the system has about 134 million unknowns,
which takes about 200 seconds for $M_7$ with V-cycle.
\begin{figure}[htp!]
	\centering
	\includegraphics[width=0.49\textwidth]{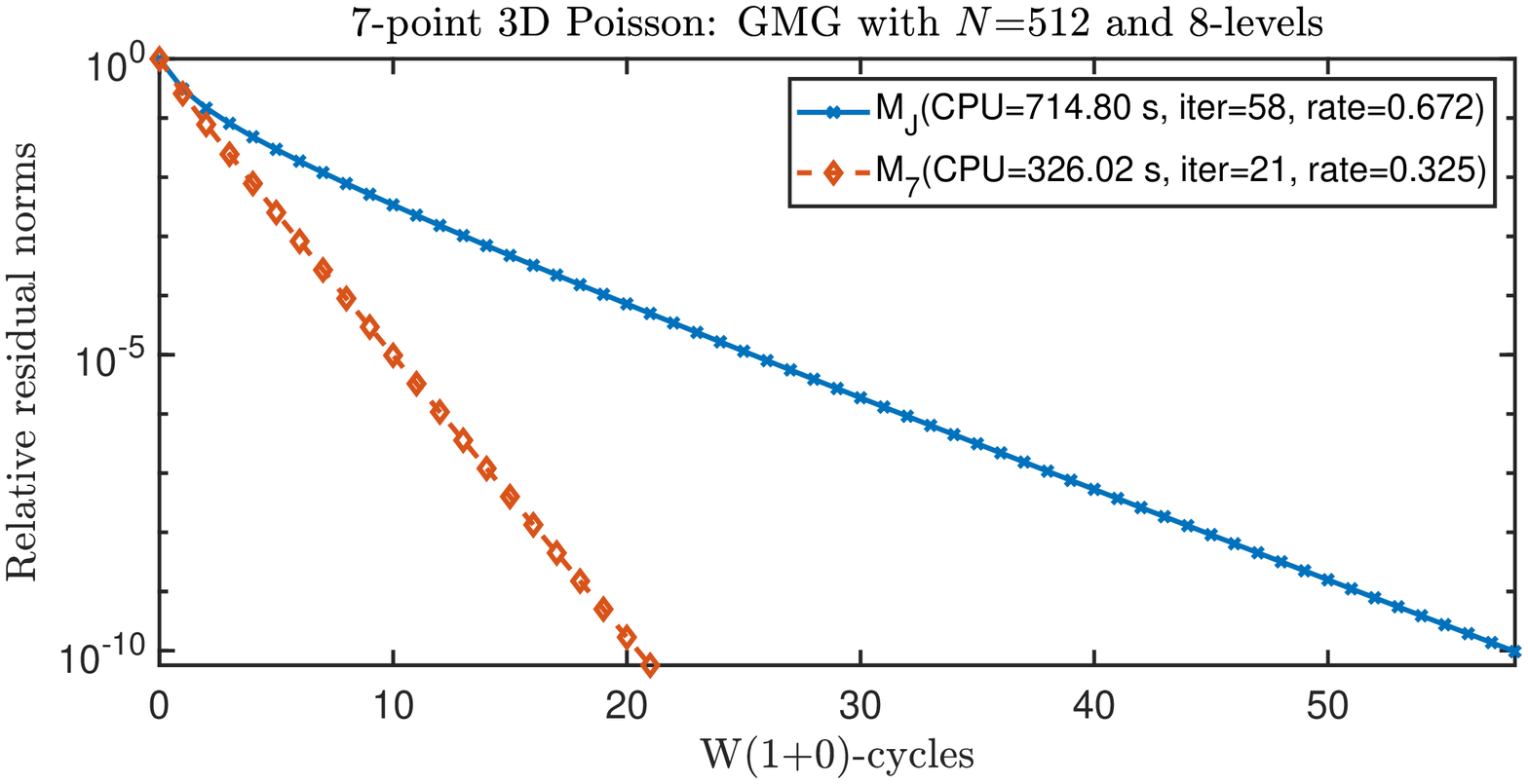}
	\includegraphics[width=0.49\textwidth]{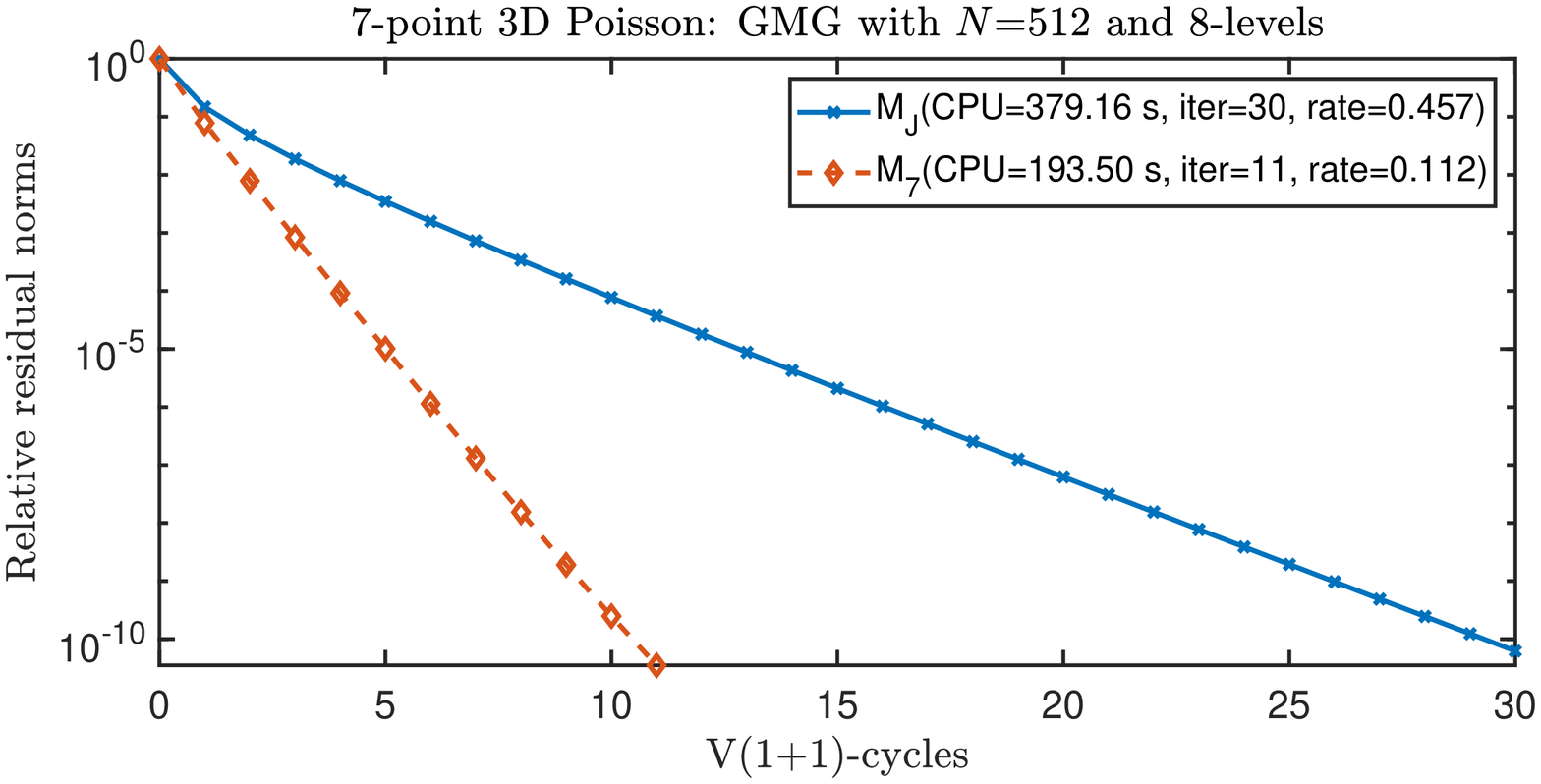}
	\caption{Example 3: comparison of  multigrid convergence with different smoothers. Left: W(1+0)-cycle. Right: V(1+1)-cycle.} \label{fig_MGplot_3D_WV}
\end{figure}

\section{Conclusion}
In this paper, we proposed and analyzed new 9-point and 7-point stencil based SPAI multigrid smoothers for solving 2D and 3D Laplacian linear systems respectively. The obtained optimal LFA smoothing factors are significantly smaller than that of the state-of-the-art SPAI smoothers in literature.  {The crucial optimal relaxation parameters are exactly derived through rigorous analysis.}  Numerical results with 2D and 3D examples validated our theoretical analysis and demonstrated the effectiveness of our proposed SPAI smoothers.
It is interesting to extend such SPAI smoothers to  general   second-order elliptic PDEs with variable coefficients, see e.g. \cite{nielsen2009preconditioning,gergelits2019laplacian} for the potential idea of preconditioning by inverting Laplacian with our proposed multigrid solvers.
It is also possible to apply our proposed SPAI smoothers to elliptic optimal control problem \cite{he2022smoothing} involving Laplacian.
The  MATLAB codes for implementing our proposed algorithms are publicly available online at the link:
\url{https://github.com/junliu2050/SPAI-MG-Laplacian}.

  \section*{Appendix A: a proof of Theorem \ref{Thm_SAI92D}}
  \newcommand{\thechapter}{A}
  \setcounter{section}{0}
  \setcounter{theorem}{0}
  \renewcommand{\thetheorem}{\thechapter.\arabic{theorem}}
  \renewcommand{\thelemma}{\thechapter.\arabic{lemma}}
  \renewcommand{\theequation}{\thechapter.\arabic{equation}}
  In this appendix, we provide a detail proof of Theorem \ref{Thm_SAI92D} through several technical lemmas and propositions.

  If $(\th_1,\th_2)\in T^{\rm{H}}=\left[-\frac{\pi}{2}, \frac{3\pi}{2}\right)^2 \setminus \left[-\frac{\pi}{2}, \frac{\pi}{2}\right)^2$, then
  $(x_1,x_2):=(\cos\th_1,\cos\th_2)\in X^{\rm{H}}:=[-1,1]^2\setminus(0,1]^2.$
  Given $(a,b)\in\R^2$, we define
  \begin{equation}
  	f_{a,b}(x_1,x_2):=[b+a(x_1+x_2)+x_1x_2](2-x_1-x_2).
  \end{equation}
{Then,
 \[\mu_{\rm loc}= \max_{(x_1,x_2)\in X^{\rm{H}}}|1-\om f_{a,b}(x_1,x_2)|.\]
  By symmetry, we may assume without loss of generality that $x_1\le x_2$. Hence,
  \begin{equation}\label{Jab}
  	\mu_{\rm opt}= \min_{a,b,\omega}\max_{(x_1, x_2)\in X_1\cup X_2}|1-\om f_{a,b}(x_1,x_2)|,
  \end{equation} }
  where
  \begin{align}
  	X_1:=&\{(x_1,x_2)\in\R^2:~-1\le x_1\le0,~-x_1\le x_2\le1\},\\
  	X_2:=&\{(x_1,x_2)\in\R^2:~-1\le x_1\le0,~x_1\le x_2\le-x_1\}.
  \end{align}
  Since $X_1\cup X_2$ is compact, the extremes of $f_{a,b}$ on $X_1\cup X_2$ can be achieved.
  We denote
 \[\chi_{a,b}:=\max_{(x_1,x_2)\in X_1\cup X_2}f_{a,b}(x_1,x_2),~~m_{a,b}:=\min_{(x_1,x_2)\in X_1\cup X_2}f_{a,b}(x_1,x_2).\]
 Since $\widetilde{A}_h$ is positive, to guarantee the relaxation scheme convergent, we have to restrict $(a,b)$ in the following region
 \[R:=\{(a,b)\in\R^2:m_{a,b}>0\}.\] 
  To find a SPAI smoother achieving the optimal smoothing factor, from \eqref{mu_opt}, we need to solve the following minimization problem
  \begin{equation}
  	\mu_{\opt}=\min_{(a,b)\in R} J(a,b),
  \end{equation}
where 
 \[J(a,b)=(\chi_{a,b}-m_{a,b})/(\chi_{a,b}+m_{a,b}),\]  
  with the corresponding optimal choice of $\om$ in the optimization problem \eqref{Jab} is \[\om_{\opt}=2/(\chi_{a,b}+m_{a,b}).\]
  Moreover, by choosing $(x_1,x_2)$ to be $(-1,-1),(-1,0),(-1,1)$, and $(0,1)$, respectively, we obtain from the assumption $f_{a,b}(x_1,x_2)\ge m_{a,b}>0$ that
  \begin{equation}\label{ab-ine}
  	b>1,~~b>|a|,~~b-2a+1>0,
  \end{equation}
  provided $(a,b)\in R$.
  For $(x_1,x_2)\in X_1$, we have
 $x_1+x_2-1\le x_1x_2\le0$
  and hence,
  \[u_{a,b}(x_1+x_2)\le f_{a,b}(x_1,x_2)\le U_{a,b}(x_1+x_2),\]
  where
 \[u_{a,b}(t):=(b+at+t-1)(2-t),~~U_{a,b}(t):=(b+at)(2-t),\]
  with $t=x_1+x_2\in[0,1]$.
  Since $b>1$ and $b>|a|$ by \eqref{ab-ine}, we obtain
  \begin{align*}
  	\max_{(x_1,x_2)\in X_1}f_{a,b}(x_1,x_2)&=\max_{t\in[0,1]}U_{a,b}(t)=\begin{cases}
  		2b,&~b\ge 2a,\\
  		(b+2a)^2/(4a),&~b\le 2a,
  	\end{cases}\\
  	\min_{(x_1,x_2)\in X_1}f_{a,b}(x_1,x_2)&=\min_{t\in[0,1]}u_{a,b}(t)=\min\{b+a,2b-2\}.
  \end{align*}
  For $(x_1,x_2)\in X_2$, we have
  $-(x_1+x_2)-1\le x_1x_2\le(x_1+x_2)^2/4,$
  and hence,
  $v_{a,b}(-x_1-x_2)\le f_{a,b}(x_1,x_2)\le V_{a,b}(-x_1-x_2),$
  where
  \begin{equation}
  	v_{a,b}(t):=(b-at+t-1)(2+t),~~V_{a,b}(t):=(b-at+t^2/4)(2+t),
  \end{equation}
  with $t=-x_1-x_2\in[0,2]$.
  Recall from \eqref{ab-ine} that $b>1$ and $b-2a+1>0$. It is easily seen that $v_{a,b}(t)$ is monotone for $t\in[0,2]$, and hence
  \[\min_{(x_1,x_2)\in X_2}f_{a,b}(x_1,x_2)=\min_{t\in[0,2]}v_{a,b}(t)=\min\{4(b-2a+1),2b-2\}.\]
  To find the maximum of $V_{a,b}(t)$ with $t\in [0,2]$, we shall investigate its derivative
 \[V_{a,b}'(t)=(-a+t/2)(2+t)+b-at+t^2/4=3t^2/4+(1-2a)t+b-2a.\]
  If $(1-2a)^2<3(b-2a)$, then $V_{a,b}(t)$ is an increasing function for $t\in[0,2]$, and
  \[\max_{(x_1,x_2)\in X_2}f_{a,b}(x_1,x_2)=V_{a,b}(1)=4(b-2a+1).\]
  If $(1-2a)^2\ge3(b-2a)$, then solving $V_{a,b}'(t)=0$ gives two solutions
  \begin{equation}\label{tpm}
  	t_\pm={2a-1\pm\sqrt{(1-2a)^2-3(b-2a)}\over 3/2}.
  \end{equation}
  If $t_-\notin[0,2]$, then the maximum of $V_{a,b}(t)$ on  $[0,2]$ is achieved at the end point, and
  \[\max_{(x_1,x_2)\in X_2}f_{a,b}(x_1,x_2)=\max_{t\in[0,2]}V_{a,b}(t)=\max\{2b,4(b-2a+1)\}.\]
  If $t_-\in[0,2]$, then the maximum of $V_{a,b}(t)$ on  $[0,2]$ is achieved at $t_-$, and
  \[\max_{(x_1,x_2)\in X_2}f_{a,b}(x_1,x_2)=V_{a,b}(t_-)\ge V_{a,b}(0)=2b.\]
  A combination of the above arguments gives the following lemma.
  \begin{lemma}
  	If $(a,b)\in R$, then $b>1,~~b>|a|,~~b-2a+1>0.$
  	Moreover,
  	\[m_{a,b}=\min\{b+a,2b-2,4(b-2a+1)\},\]
  	and
  	\[\chi_{a,b}=\begin{cases}
  		\max\{\frac{(b+2a)^2}{4a},4(b-2a+1)\},&~b\le2a,\\
  		\max\{2b,4(b-2a+1)\},&~(1-2a)^2<3(b-2a),\\
  		\max\{2b,4(b-2a+1)\},&~(1-2a)^2\ge3(b-2a)>0,~t_-\notin[0,2],\\
  		V_{a,b}(t_-),&~(1-2a)^2\ge3(b-2a)>0,~t_-\in[0,2].
  	\end{cases} \]
  \end{lemma}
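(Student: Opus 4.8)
My plan is to prove the lemma in two stages: first the sign constraints on $(a,b)$, then the explicit formulas for the extremes $m_{a,b}$ and $\chi_{a,b}$, both obtained by reducing the two-variable optimization of $f_{a,b}$ over $X_1\cup X_2$ to a family of one-variable problems. For the constraints I would simply evaluate $f_{a,b}$ at the four corner points $(-1,-1),(-1,0),(-1,1),(0,1)$ of $X_1\cup X_2$. Each lies in $X_1\cup X_2$, so each value is at least $m_{a,b}>0$ whenever $(a,b)\in R$. A direct computation gives $f_{a,b}(-1,-1)=4(b-2a+1)$, $f_{a,b}(-1,0)=3(b-a)$, $f_{a,b}(-1,1)=2(b-1)$, and $f_{a,b}(0,1)=b+a$, whose positivity yields $b-2a+1>0$, $b>a$, $b>1$, and $b>-a$; the last two inequalities combine to $b>|a|$.

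The key reduction is that $f_{a,b}$ depends on $(x_1,x_2)$ only through $s=x_1+x_2$ and $p=x_1x_2$, via $f_{a,b}=(b+as+p)(2-s)$, and that $2-s>0$ throughout the region. Hence, for fixed $s$, $f_{a,b}$ is affine and increasing in $p$, so its extremes over each slice are attained at the endpoints of the admissible range of $p$. On $X_1$ (where $t=s\in[0,1]$) one has $p\in[t-1,0]$, giving the envelopes $u_{a,b}(t)\le f_{a,b}\le U_{a,b}(t)$; on $X_2$ (where $t=-s\in[0,2]$) one has $p\in[t-1,t^2/4]$, giving $v_{a,b}(t)\le f_{a,b}\le V_{a,b}(t)$. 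Thus the two-dimensional extremes of $f_{a,b}$ reduce to one-dimensional optimizations of $u_{a,b},v_{a,b}$ for the minimum and $U_{a,b},V_{a,b}$ for the maximum.

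For the minima I would verify that, under the constraints, $u_{a,b}$ is either concave (when $a\ge-1$, since $u_{a,b}''=-2(a+1)$) or monotone decreasing on $[0,1]$ (when $a<-1$, using $u_{a,b}'<0$ at both endpoints), and that $v_{a,b}$, which satisfies $v_{a,b}'(0)=b-2a+1>0$, is either increasing (its vertex lies left of $0$ when $a\le1$) or concave (when $a>1$). In every case the minimum sits at an endpoint, producing $\min\{2b-2,b+a\}$ on $X_1$ and $\min\{2b-2,4(b-2a+1)\}$ on $X_2$, whose combination is $m_{a,b}=\min\{b+a,2b-2,4(b-2a+1)\}$.

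The maxima are where the real work lies. On $X_1$, $U_{a,b}$ is a parabola with $U_{a,b}''=-2a$; for $a>0$ its vertex $t^\ast=1-b/(2a)$ lies in $[0,1]$ exactly when $b\le2a$, yielding $\max_{X_1}f_{a,b}=(b+2a)^2/(4a)$ there and $2b$ otherwise, and one notes $(b+2a)^2/(4a)-2b=(b-2a)^2/(4a)\ge0$, so the $2b$ branch is always dominated when $b\le2a$. On $X_2$, $V_{a,b}$ is cubic with $V_{a,b}'(t)=3t^2/4+(1-2a)t+(b-2a)$, an upward parabola of discriminant $(1-2a)^2-3(b-2a)$: when this is negative $V_{a,b}$ is increasing and the max is $V_{a,b}(2)=4(b-2a+1)$; when it is nonnegative the roots $t_\pm$ of \eqref{tpm} isolate a local maximum at $t_-$, so the max is the larger endpoint value (if $t_-\notin[0,2]$) or $V_{a,b}(t_-)$ (if $t_-\in[0,2]$). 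Assembling $\chi_{a,b}=\max(\max_{X_1}f_{a,b},\max_{X_2}f_{a,b})$ and discarding the dominated $2b$ term gives the four listed cases. I expect the $X_2$ maximum to be the \emph{main obstacle}: one must track the sign of $V_{a,b}'$ through $t_\pm$, verify that $b\le 2a$ forces $t_-<0$ (so that only endpoint values of $V_{a,b}$ compete in the first case, collapsing $\max\{2b,4(b-2a+1)\}$ into the stated formula), and confirm that in the last case the local-maximum value $V_{a,b}(t_-)$ indeed dominates the competing endpoint $V_{a,b}(2)=4(b-2a+1)$.
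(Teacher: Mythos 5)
Your outline retraces the paper's own route almost step for step: the same four corner evaluations $(-1,-1),(-1,0),(-1,1),(0,1)$ for the constraints, the same reduction through $s=x_1+x_2$, $p=x_1x_2$ (monotonicity of $f_{a,b}$ in $p$ for fixed $s$, with $2-s>0$) to the one-variable envelopes $u_{a,b},U_{a,b}$ on $X_1$ and $v_{a,b},V_{a,b}$ on $X_2$, and the same critical-point analysis of $U_{a,b}'$ and $V_{a,b}'$. In two spots you are actually more careful than the paper: you justify attainability of the envelope bounds slice by slice, and for the minimum of $v_{a,b}$ you split into the increasing case ($a\le 1$, vertex left of $0$) and the concave case ($a>1$), whereas the paper simply asserts that $v_{a,b}$ is ``monotone'' on $[0,2]$ --- which fails when $a>1$ and $v_{a,b}'(2)<0$, though the endpoint conclusion survives via your concavity argument.

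However, the one step you defer --- ``confirm that in the last case $V_{a,b}(t_-)$ indeed dominates the competing endpoint $V_{a,b}(2)=4(b-2a+1)$'' --- is a genuine gap, and it cannot be closed, because the claim is false on part of $R_3$ (the paper's own proof asserts it without justification and shows only $V_{a,b}(t_-)\ge V_{a,b}(0)=2b$). Take $(a,b)=(8/5,24/5)$: then $m_{a,b}=\min\{32/5,38/5,52/5\}=32/5>0$, so $(a,b)\in R$, and $(1-2a)^2=121/25\ge 3(b-2a)=24/5>0$ with $t_-=4/3\in[0,2]$, so this point falls in case 4; yet $t_+=8/5<2$ and
\[V_{a,b}(t_-)=\tfrac{280}{27}\approx 10.37 < \tfrac{52}{5}=V_{a,b}(2)=f_{a,b}(-1,-1),\]
so the true maximum $\chi_{a,b}$ equals $52/5$, not $V_{a,b}(t_-)$. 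Once $t_+<2$, the cubic $V_{a,b}$ increases again past $t_+$ and the endpoint $t=2$ can win. The correct case-4 formula is $\chi_{a,b}=\max\{V_{a,b}(t_-),\,4(b-2a+1)\}$, the sign pattern of $V_{a,b}'$ giving $\max_{[0,2]}V_{a,b}=\max\{V_{a,b}(t_-),V_{a,b}(2)\}$ and $V_{a,b}(t_-)\ge V_{a,b}(0)=2b$ absorbing the $X_1$ branch (since $b>2a$ there). To complete your proof you must prove this corrected statement rather than the lemma verbatim. The flaw is harmless downstream: the $R_3$ analysis only uses $\chi_{a,b}\ge V_{a,b}(t_-)$, so $r_{a,b}\le w(t_-,a)/W(t_-,a)$ and the lower bound on $J$ survive, and at the optimizer $(a,b)=(5/3,11/3)$ one has $t_+\approx 2.96>2$, where $\chi_{a,b}=V_{a,b}(t_-)$ is exact.
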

  For convenience of discussion, we divide $R$ into four disjoint sub-regions:
  \begin{align*}
  	R_0&:=\{(a,b)\in R,~b\le2a\},\\
  	R_1&:=\{(a,b)\in R,~(1-2a)^2<3(b-2a)\},\\
  	R_2&:=\{(a,b)\in R,~(1-2a)^2\ge3(b-2a)>0,~t_-\notin[0,2]\},\\
  	R_3&:=\{(a,b)\in R,~(1-2a)^2\ge3(b-2a)>0,~t_-\in[0,2]\}.
  \end{align*}
  Recall that
  \begin{equation}
  	J(a,b)={\chi_{a,b}-m_{a,b}\over \chi_{a,b}+m_{a,b}}={1-r_{a,b}\over 1+r_{a,b}},~~r_{a,b}:={m_{a,b}\over \chi_{a,b}}\in(0,1].
  \end{equation}
  We shall find lower bounds of $J(a,b)$ in the regions: $R_0$, $R_1\cup R_2$, and $R_3$, respectively.
  \begin{proposition}\label{prop-0}
  	If $(a,b)\in R_0$, then $J(a,b)\ge1/5$.
  \end{proposition}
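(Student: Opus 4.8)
The plan is to reformulate the claim as a bound on the ratio $r_{a,b}=m_{a,b}/\chi_{a,b}$. Since $J(a,b)=(1-r_{a,b})/(1+r_{a,b})$ is decreasing in $r_{a,b}$, the inequality $J(a,b)\ge 1/5$ is equivalent to $r_{a,b}\le 2/3$, i.e. to the single estimate $m_{a,b}\le\tfrac23\chi_{a,b}$. First I would extract from the preceding lemma only the facts needed on $R_0$: that $a>0$ (forced by $b>|a|$ together with $b\le 2a$), that $a<b\le 2a$, and the two inequalities $m_{a,b}\le\min\{2b-2,\,4(b-2a+1)\}$ and $\chi_{a,b}\ge (b+2a)^2/(4a)$.

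The key simplification is that we need not determine which candidate realizes $\chi_{a,b}$. Because $\chi_{a,b}\ge (b+2a)^2/(4a)$ holds unconditionally, we get $\tfrac23\chi_{a,b}\ge (b+2a)^2/(6a)$, so it suffices to prove $m_{a,b}\le (b+2a)^2/(6a)$. Using $m_{a,b}\le 2b-2$ and $m_{a,b}\le 4(b-2a+1)$, this follows as soon as at least one of the one-variable inequalities $2b-2\le (b+2a)^2/(6a)$ or $4(b-2a+1)\le (b+2a)^2/(6a)$ holds. Clearing the (positive) denominator $6a$, these are exactly the nonnegativity of $g(a,b):=b^2-8ab+4a^2+12a$ and of $p(a,b):=b^2-20ab+52a^2-24a$.

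The remaining step is an elementary one-variable discussion. For fixed $a>0$, both $g$ and $p$ are upward parabolas in $b$ with vertices at $b=4a$ and $b=10a$, both lying to the right of the feasible range $b\le 2a$; hence both are decreasing there and attain their minima at the endpoint $b=2a$, where $g(a,2a)=4a(3-2a)$ and $p(a,2a)=8a(2a-3)$. Since $a>0$, at least one of these is nonnegative --- the first when $a\le 3/2$, the second when $a\ge 3/2$ --- so at every point of $R_0$ we have $g\ge 0$ or $p\ge 0$, which yields $m_{a,b}\le (b+2a)^2/(6a)\le\tfrac23\chi_{a,b}$ and finishes the proof. I expect the only genuine subtlety to be deciding which of the two bounds on $m_{a,b}$ to invoke: the switch happens precisely at $a=3/2$, which is no coincidence, since the corner $(a,b)=(3/2,3)$ of $R_0$ (where $b=2a$) is exactly the point at which $r_{a,b}=2/3$ and the bound $J=1/5$ is attained with equality. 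The one place I would be careful is verifying that the minima of $g$ and $p$ over the admissible $b$ sit at the endpoint $b=2a$ rather than at an interior point.
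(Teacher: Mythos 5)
Your proof is correct, and it takes a genuinely different route from the paper's. Both arguments reduce the claim to $r_{a,b}=m_{a,b}/\chi_{a,b}\le 2/3$ and both rest on the same unconditional lower bound $\chi_{a,b}\ge (b+2a)^2/(4a)$, but the paper then splits $R_0$ into three cases according to which of $b+a$, $2b-2$, $4(b-2a+1)$ realizes $m_{a,b}$: it shows the case $m_{a,b}=b+a$ is vacuous on $R_0$ (its defining constraints are contradictory), and in the remaining two cases it derives auxiliary constraints (e.g.\ $1/2<a\le 3/2$, $b\le 4a-3$) and bounds $r_{a,b}$ case by case, with a further split at $a=3/2$. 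You sidestep identifying the minimizer altogether: since $m_{a,b}\le\min\{2b-2,\,4(b-2a+1)\}$ holds unconditionally, it suffices that one of the two quadratic inequalities $g(a,b)=b^2-8ab+4a^2+12a\ge0$ or $p(a,b)=b^2-20ab+52a^2-24a\ge0$ holds, and since both quadratics in $b$ have vertices ($b=4a$ and $b=10a$) strictly to the right of the feasible range $b\le 2a$, both are decreasing there and everything collapses to the endpoint values $g(a,2a)=4a(3-2a)$ and $p(a,2a)=8a(2a-3)$, whose signs are complementary with the switch at $a=3/2$; I verified the algebra (including $a>0$, which justifies clearing the denominator $6a$, and the identity $\tfrac23\cdot(b+2a)^2/(4a)=(b+2a)^2/(6a)$) and it is all right. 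Your version is shorter, eliminates the vacuous-case argument, and has the bonus of exhibiting $(a,b)=(3/2,3)$, where $m_{a,b}=4$ and $\chi_{a,b}=6$, as a point of $R_0$ at which $r_{a,b}=2/3$ exactly, so the constant $1/5$ is attained on $R_0$ --- a sharpness observation the paper does not record. What the paper's heavier case analysis buys is uniformity of method: the same ``which term realizes the extremum'' decomposition is reused for the harder regions $R_1\cup R_2$ and $R_3$, where a clean disjunctive shortcut like yours is less readily available.
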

  \begin{proof}
  	If $(a,b)\in R_0$, then $m_{a,b}=\min\{b+a,2b-2,4(b-2a+1)\},$ and $\chi_{a,b}=\max\{(b+2a)^2/(4a),4(b-2a+1)\}\ge(b+2a)^2/(4a)$.
  	There are three cases:
  	\begin{enumerate}[{Case} 1:]
  		\item   		$m_{a,b}=b+a$ and $\chi_{a,b}\ge(b+2a)^2/(4a)$. It can be shown that
  		\[ b\ge a+2,~b\ge3a-4/3,~b\le2a,~2\le a\le4/3,\]
  		a contradiction. Thus, this case does not exist.
  		\item   		$m_{a,b}=2b-2$ and $\chi_{a,b}\ge(b+2a)^2/(4a)$. It can be shown that
  	\[b\le a+2,~b\ge4a-3,~1<b\le2a,~1/2<a\le3/2,\]
  		from which we further obtain
  		\[r_{a,b}\le{4a(2b-2)\over (b+2a)^2}\le{8a(2a-1)\over(4a)^2}={2a-1\over2a}\le{2\over3}.\]
  		\item   		$m_{a,b}=4(b-2a+1)$ and $\chi_{a,b}=(b+2a)^2/(4a)$. It can be shown that
  	\[b\le 3a-4/3,~2a-1<b\le4a-3,~b\le2a,~a>1.\]
  		If $a\ge3/2$, then we obtain from $b\le2a$ that
  		\[r_{a,b}\le{16a(b-2a+1)\over (b+2a)^2}\le{16a\over(4a)^2}={1\over a}\le{2\over3}.\]
  		If $a\le3/2$, then we obtain from $b\le4a-3$ that
  		\[r_{a,b}\le{16a(b-2a+1)\over (b+2a)^2}\le{16a(2a-2)\over(6a-3)^2}\le{24\over6^2}={2\over3}.\]
  	\end{enumerate}
  	A combination of the above arguments yields $r_{a,b}\le2/3$ and hence
  	$J(a,b)\ge1/5$.
  \end{proof}

  \begin{proposition}\label{prop-12}
  	If $(a,b)\in R_1\cup R_2$, then $J(a,b)\ge3/17$.
  \end{proposition}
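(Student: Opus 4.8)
The plan is to recast the target as a single scalar inequality. Since $J(a,b)=(1-r_{a,b})/(1+r_{a,b})$ is a decreasing function of $r_{a,b}\in(0,1]$, the bound $J(a,b)\ge 3/17$ is equivalent to $r_{a,b}\le 7/10$, so it suffices to prove this on $R_1\cup R_2$. Throughout I would use the formulas supplied by the Lemma, which on $R_1\cup R_2$ read $\chi_{a,b}=\max\{2b,4(b-2a+1)\}$ and $m_{a,b}=\min\{b+a,2b-2,4(b-2a+1)\}$, together with $b>2a$ (we are outside $R_0$) and the constraints $b>1$, $b>|a|$, $b-2a+1>0$ from \eqref{ab-ine}.

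First I would split according to which term realizes $\chi_{a,b}$: Case I where $2b\ge 4(b-2a+1)$, equivalently $a\ge (b+2)/4$, so $\chi_{a,b}=2b$; and Case II where $a\le (b+2)/4$, so $\chi_{a,b}=4(b-2a+1)$. In Case II the inequality $4(b-2a+1)=\chi_{a,b}\ge 2b>2b-2$ forces $m_{a,b}=\min\{b+a,2b-2\}$, and I would treat the sub-cases $a\le b-2$ (so $m_{a,b}=b+a$) and $a\ge b-2$ (so $m_{a,b}=2b-2$) separately. In each sub-case $r_{a,b}$ is a ratio of two affine functions, and clearing denominators turns $r_{a,b}\le 7/10$ into a linear inequality in $a$; combining the Case II bound $a\le (b+2)/4$ with the sub-case constraint (noting the second sub-case is nonempty only for $b\le 10/3$) closes Case II after a short split at the threshold $b=10/3$, without ever invoking the discriminant conditions.

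The hard part is Case I, because the naive estimate using $\chi_{a,b}=2b$ and $m_{a,b}\le 2b-2$ only gives $r_{a,b}\le 1-1/b$, which exceeds $7/10$ once $b>10/3$; the offending points must be excluded, and this is precisely where the region-defining conditions enter. My plan is to first show that $R_1\cap(\text{Case I})=\emptyset$: substituting $b\le 4a-2$ into $(1-2a)^2<3(b-2a)$ yields $4a^2-10a+7<0$, impossible since its discriminant $100-112<0$ keeps the quadratic positive. Hence Case I lies entirely in $R_2$, where the condition $t_-\notin[0,2]$ defining $R_2$, with $t_-$ given by \eqref{tpm}, becomes available. Because $a\ge (b+2)/4>3/4$ (using $b>1$) and $b>2a$ force $t_-\ge 0$, the only remaining possibility is $t_->2$, which after squaring $2a-4>\sqrt{(1-2a)^2-3(b-2a)}$ translates into $a>2$ and $b>6a-5$, so in particular $b>7$. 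With these, $a<(b+5)/6\le 2b/5$, whence $m_{a,b}\le b+a<7b/5=(7/10)\,\chi_{a,b}$ and $r_{a,b}<7/10$.

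Combining the two cases yields $r_{a,b}\le 7/10$ on all of $R_1\cup R_2$, hence $J(a,b)\ge 3/17$. I expect the value $7/10$ to be only approached and not attained on $R_1\cup R_2$ (it is realized at a boundary point that actually belongs to $R_3$), which is consistent with the global optimum of the whole problem residing in $R_3$. The principal technical burden is thus twofold: the careful bookkeeping of the affine-ratio inequalities and threshold split in Case II, and—above all—the discriminant and $t_-$ analysis required to rule out the dangerous portion of Case I.
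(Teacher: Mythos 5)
Your proof is correct, and although it rests on the same foundation as the paper's argument --- the equivalence $J(a,b)\ge 3/17 \Leftrightarrow r_{a,b}\le 7/10$, the Lemma's formulas $m_{a,b}=\min\{b+a,\,2b-2,\,4(b-2a+1)\}$ and $\chi_{a,b}=\max\{2b,\,4(b-2a+1)\}$, and the $t_-$ conditions from \eqref{tpm} --- your decomposition is genuinely different and more economical. The paper enumerates all six pairings of minimizer and maximizer and bounds $r_{a,b}$ case by case; you split only on which term realizes $\chi_{a,b}$. Your Case II ($a\le(b+2)/4$) indeed closes with purely affine manipulations as you claim: for $m_{a,b}=b+a$ the ratio is maximized at $a=\min\{(b+2)/4,\,b-2\}$, giving $(5b+2)/(8b)\le 7/10$ when $b\ge 10/3$ and $(b-1)/\bigl(2(5-b)\bigr)\le 7/10$ when $b\le 10/3$, while for $m_{a,b}=2b-2$ the sub-case is nonempty only for $b\le 10/3$, where $r_{a,b}\le 1-1/b\le 7/10$ --- so it subsumes the paper's Cases 2, 4, and 6 without any discriminant machinery. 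Your Case I contains the real novelty: substituting $b\le 4a-2$ into $(1-2a)^2<3(b-2a)$ yields $4a^2-10a+7<0$, impossible since the discriminant is $100-112<0$, so Case I misses $R_1$ entirely; then $b>2a$ (forced by the defining inequalities of $R_1\cup R_2$) together with $a\ge(b+2)/4>3/4$ makes $t_->0$, so membership in $R_2$ leaves only $t_->2$, whence $a>2$, $b>6a-5>7$, $a<(b+5)/6\le 2b/5$, and $r_{a,b}\le (b+a)/(2b)<7/10$ --- one uniform argument replacing the paper's Cases 1, 3, and 5. In fact your analysis quietly exposes that the paper's Cases 3 and 5 are vacuous within $R_1\cup R_2$: their conclusion $t_-<0$ would require $b<2a$, contradicting the condition $3(b-2a)>0$ in the definition of $R_2$, though the paper still extracts valid (vacuously true) bounds there. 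Your closing observation is also right: the value $7/10$ is attained only at $(a,b)=(4/3,10/3)$, where $t_-=(2/9)(5-\sqrt{7})\in[0,2]$, so that point lies in $R_3$ and the bound on $R_1\cup R_2$ is approached but suffices. What your route buys is fewer cases and a clean separation between the purely linear half and the half requiring the $t_-$ analysis; what the paper's route buys is slightly sharper per-case constants (e.g.\ $9/14$ and $2/3$ in several cases), which are irrelevant to the stated bound of $3/17$.
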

  \begin{proof}
  	If $(a,b)\in R_1\cup R_2$, then $m_{a,b}=\min\{b+a,2b-2,4(b-2a+1)\},$ and $\chi_{a,b}=\max\{2b,4(b-2a+1)\}$.
  	There are six cases to be considered.
  	\begin{enumerate}[{Case} 1:]
  		\item
  		$m_{a,b}=b+a$ and $\chi_{a,b}=2b$. It can be shown that
  		\[b\ge a+2,~b\ge3a-4/3,~b\le4a-2,~a\ge4/3,~b\ge2a,\]
  		from which we further obtain
  		$(1-2a)^2\ge3(b-2a)$ and $t_-\ge0.$
  		Since $(a,b)\in R_1\cup R_2$, we have $t_->2$, which implies
  		$a>2$, $b>6a-5$,
  		and
  	\[r_{a,b}={1\over2}+{a\over2b}<{1\over2}+{a\over2(6a-5)}<{1\over2}+{1\over7}={9\over14}.\]
  		\item
  		$m_{a,b}=b+a$ and $\chi_{a,b}=4(b-2a+1)$. It can be shown that
  	\[b\ge a+2,~b\ge3a-4/3,~b\ge4a-2.\]
  		If $a\le1/3$, then $b+a\le b-2a+2$, and
  	\[r_{a,b}={b+a\over4(b-2a+1)}\le{1\over4}.\]
  		If $a\ge4/3$, then we obtain from $b\ge4a-2$ that
  	\[r_{a,b}={b+a\over4(b-2a+1)}\le{5a-2\over4(2a-1)}\le{14/3\over20/3}={7\over10}.\]
  		If $1/3\le a\le4/3$, then we obtain from $b\ge a+2$ that
  		\[r_{a,b}={b+a\over4(b-2a+1)}\le{2a+2\over4(3-a)}\le{14/3\over20/3}={7\over10}.\]
  		\item
  		$m_{a,b}=2b-2$ and $\chi_{a,b}=2b$. It can be shown that
  		\[b\le a+2,~b\ge4a-3,~1<b\le4a-2,~3/4<a\le5/3,\]
  		from which we further obtain
  		$(1-2a)^2\ge3(b-2a)$ and $t_-\le2.$
  		Since $(a,b)\in R_1\cup R_2$, we have $t_-<0$, which implies $b<2a\le10/3$ and
  	\[r_{a,b}=1-{1\over b}<{7\over10}.\]
  		\item
  		$m_{a,b}=2b-2$ and $\chi_{a,b}=4(b-2a+1)$. It can be shown that
  	\[b\le a+2,~b\ge4a-3,~b\ge4a-2,~a\le4/3.\]
  		If $a\le1$, then $b-1\le b-2a+1$ and
  $r_{a,b}={b-1\over2(b-2a+1)}\le{1\over2}.$
  		If $a\ge1$, then we obtain from $b\ge a+2$ and $a\le4/3$ that
  	\[r_{a,b}={b-1\over2(b-2a+1)}\le{a+1\over2(3-a)}\le{7/3\over10/3}={7\over10}.\]
  		\item
  		$m_{a,b}=4(b-2a+1)$ and $\chi_{a,b}=2b$. It can be shown that
  	\[b\le 3a-4/3,~b\le4a-3,~b\le4a-2,\]
  		from which we further obtain
  		$(1-2a)^2\ge3(b-2a).$
  		It also follows from $b>1$ that $a>1$.
  		Since $(a,b)\in R_1\cup R_2$, we have either $t_-<0$ or $t_->2$.
  		If $t_->2$, then we have $a>2$ and $b>6a-5$, which contradicts $b\le4a-3$.
  		Hence, we have $t_-<0$, and consequently, $b<2a$. If $a\ge3/2$, then we obtain from $b<2a$ that
  		\[r_{a,b}={2(b-2a+1)\over b}\le{1\over a}\le{2\over3}.\]
  		If $a\le3/2$, then we obtain from $b\le4a-3$ that
  		\[r_{a,b}={2(b-2a+1)\over b}\le{2(2a-2)\over 4a-3}\le{2\over3}.\]
  		\item
  		$m_{a,b}=4(b-2a+1)$ and $\chi_{a,b}=4(b-2a+1)$. It can be shown that
  		\[b\le 3a-4/3,~b\le4a-3,~b\ge4a-2,\]
  		a contradiction. Hence, this case does not exist.
  	\end{enumerate}
  	A combination of all cases gives $r_{a,b}\le7/10$ and $J(a,b)\ge3/17$ for all $(a,b)\in R_1\cup R_2$.
  \end{proof}
  It remains to find the lower bound of $J(a,b)$ in $R_3$.
  If $(a,b)\in R_3$, then $3b\le 4a^2+2a+1$ and $0\le t_-\le2$.
  From the quadrature formula of $t_-$ in \eqref{tpm}, we obtain $a\ge(2+3t_-)/4$, $b\ge 2a$ and
  \[2a-4\le\sqrt{4a^2+2a+1-3b}.\]
  If $a\ge2$, then $b\le6a-5$. It follows from $\chi_{a,b}\ge f_{a,b}(0,0)=2b$ and $m_{a,b}\le f_{a,b}(0,1)=b+a$ that
  \[r_{a,b}={m_{a,b}\over \chi_{a,b}}\le{1\over2}+{a\over2b}\le{1\over2}+{a\over2(6a-5)}<{1\over2}+{1\over7}={9\over14}.\]
  Now, we assume $a\le2$.
  Since $V_{a,b}'(t_-)=0$, we have $b=-3t_-^2/4+(2a-1)t_-+2a$.
  Recall that $m_{a,b}=\min\{b+a,2b-2,4(b-2a+1)\},$ and $\chi_{a,b}=V_{a,b}(t_-)\ge\max\{2b,4(b-2a+1)\}$.
  We obtain
  \[r_{a,b}={m_{a,b}\over \chi_{a,b}}={w(t_-,a)\over W(t_-,a)},\]
  where $w(t_-,a)=\min\{w_1(t_-,a),w_2(t_-,a),w_3(t_-,a)\}$, and
  \begin{align*}
  	w_1(t_-,a)&=b+a=-3t_-^2/4+(2a-1)t_-+3a=(2t_-+3)a-(3t_-^2/4+t_-),\\
  	w_2(t_-,a)&=2b-2=-3t_-^2/2+2(2a-1)t_-+4a-2\\
  	&=(4t_-+4)a-(3t_-^2/2+2t_-+2),\\
  	w_3(t_-,a)&=4(b-2a+1)=-3t_-^2+4(2a-1)t_-+4=8t_-a-(3t_-^2+4t_--4),\\
  	W(t_-,a)&=V_{a,b}(t_-)=[-t_-^2/2+(a-1)t_-+2a](2+t_-)\nonumber\\
  	&=[(t_-+2)a-(t_-^2/2+t_-)](t_-+2).
  \end{align*}
  Replacing $t_-$ with $t$ in the above formulas,
  we obtain the bi-variate polynomials $w_1(t,a),~w_2(t,a),~w_3(t,a)$, and $W(t,a)$.
  Denote
  \begin{equation}
  	r^*:=\max_{(t,a)\in S}{w(t,a)\over W(t,a)},
  \end{equation}
  where
 \[
  	S:=\{(t,a)\in\R^2:0\le t\le2,~(3t+2)/4\le a\le2\}.
\]
  The above arguments can be summarized in the following lemma.
  \begin{lemma}
  	For any $(a,b)\in R_3$ with $a\ge 2$, we have $r_{a,b}\le9/14$.
  	For any $(a,b)\in R_3$ with $a\le 2$, we have $r_{a,b}\le r^*$.
  \end{lemma}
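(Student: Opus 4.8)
The plan is to split $R_3$ along the threshold $a=2$ and estimate $r_{a,b}=m_{a,b}/\chi_{a,b}$ separately on the two pieces. What distinguishes $R_3$ from the earlier regions is that the maximum is now the interior critical value $\chi_{a,b}=V_{a,b}(t_-)$ rather than a corner value, so the stationarity relation $V_{a,b}'(t_-)=0$ must be exploited. My strategy is to keep $\chi_{a,b}$ in this implicit form only where it helps: for $a\le2$ I would use it to change variables, while for $a\ge2$ I would discard it in favour of crude corner bounds.

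For $a\le2$ the clean move is to trade $b$ for the critical point $t_-$. Solving $V_{a,b}'(t_-)=0$ gives $b=-3t_-^2/4+(2a-1)t_-+2a$, and substituting this into the three candidates $b+a,\ 2b-2,\ 4(b-2a+1)$ for $m_{a,b}$ and into $\chi_{a,b}=V_{a,b}(t_-)$ converts $r_{a,b}$ into the ratio $w(t_-,a)/W(t_-,a)$ of the explicit bivariate polynomials defined above, with $w=\min\{w_1,w_2,w_3\}$. It then remains to confirm that $(a,b)\mapsto(t_-,a)$ maps $R_3\cap\{a\le2\}$ into $S$: the bounds $0\le t_-\le2$ are part of the definition of $R_3$, the inequality $a\ge(2+3t_-)/4$ follows since the square-root term in \eqref{tpm} is nonnegative, and $a\le2$ is the hypothesis. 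Hence $r_{a,b}\le\max_S w/W=r^*$, which is the second assertion.

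For $a\ge2$ I would avoid $V_{a,b}(t_-)$ entirely and bound $r_{a,b}$ by one-sided corner estimates: $\chi_{a,b}\ge f_{a,b}(0,0)=2b$ and $m_{a,b}\le f_{a,b}(0,1)=b+a$ give $r_{a,b}\le(b+a)/(2b)=\tfrac12+a/(2b)$. The useful constraint here is $t_-\le2$, which for $a\ge2$ lets one square $2a-4\le\sqrt{4a^2+2a+1-3b}$ to obtain $b\le6a-5$; one then aims to control the residual $a/(2b)$ and push $r_{a,b}$ below $9/14$. I expect precisely this last step to be the main obstacle: the corner estimates are lossy, so closing the rational inequality uniformly over the admissible band of $b$ is tight, and it may be necessary to track which of $b+a,\ 2b-2,\ 4(b-2a+1)$ actually attains $m_{a,b}$ in order to sharpen $\chi_{a,b}\ge2b$. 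By contrast the $a\le2$ branch is a clean reduction, and its real difficulty --- evaluating $r^*$ and matching it against $\mu_{\opt}=(9+8\sqrt{10})/215$ --- is postponed to the subsequent analysis of $w/W$ on $S$.
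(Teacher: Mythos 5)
Your second branch ($a\le2$) is exactly the paper's own argument, carried out correctly and completely: solve $V_{a,b}'(t_-)=0$ for $b$, rewrite $r_{a,b}=w(t_-,a)/W(t_-,a)$ using $\chi_{a,b}=V_{a,b}(t_-)$ and $m_{a,b}=\min\{b+a,\,2b-2,\,4(b-2a+1)\}$, and place $(t_-,a)$ in $S$ via the three checks you list ($0\le t_-\le2$ from the definition of $R_3$, $a\ge(2+3t_-)/4$ from nonnegativity of the square root in \eqref{tpm}, and $a\le2$ by hypothesis). Nothing to add there.

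The obstacle you flagged in the $a\ge2$ branch is genuine, and in fact worse than you suspect: the first assertion of the lemma is false, and the paper's own proof of it contains a direction error. From $t_-\le2$ one gets, for $a\ge2$, the \emph{upper} bound $b\le6a-5$ (as you derived), but the paper then writes $\frac{a}{2b}\le\frac{a}{2(6a-5)}$, which requires the \emph{reverse} bound $b\ge6a-5$; that chain is legitimate in Case 1 of Proposition \ref{prop-12}, where $t_->2$ forces $b>6a-5$, but in $R_3$ the inequality flips and cannot be reused. The only lower bound on $b$ available in $R_3$ is $b>2a$, and the corner estimates then yield only $r_{a,b}<\frac12+\frac14=\frac34$ --- precisely the lossiness you anticipated. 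A concrete counterexample: $(a,b)=(21/10,\,5)\in R_3$ has $\sqrt{(1-2a)^2-3(b-2a)}=14/5$, hence $t_-=4/15\in[0,2]$, $m_{a,b}=\min\{71/10,\,8,\,36/5\}=71/10$, and $\chi_{a,b}=V_{a,b}(4/15)=\frac{1003}{225}\cdot\frac{34}{15}=\frac{34102}{3375}$, so $r_{a,b}=\frac{47925}{68204}\approx0.703>\frac{9}{14}\approx0.643$. The lemma and the theorem can nevertheless be salvaged, because your reduction for $a\le2$ never actually used $a\le2$: the identity $r_{a,b}=w(t_-,a)/W(t_-,a)$ holds on all of $R_3$, with $(t_-,a)$ ranging over the unbounded strip $\{(t,a):0\le t\le2,\ a\ge(3t+2)/4\}$, and the subsequent lemmas bounding $w/W$ on $S$ never invoke the upper bound $a\le2$ in their proofs: the ratios $w_1/W$ and $w_3/W$ are decreasing in $a$ (so the suprema are taken at lower thresholds), while $w_2/W$ is increasing in $a$ but only governs cases whose defining conditions bound $a$ above by finite thresholds. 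Hence the correct statement is simply $r_{a,b}\le r^*\le(68-5\sqrt{10})/72$ for \emph{all} $(a,b)\in R_3$, which is all Proposition \ref{prop-3} requires; numerically the supremum of $r_{a,b}$ over $R_3\cap\{a\ge2\}$ is about $0.708$ (near $(a,b)=(2,14/3)$), safely below $(68-5\sqrt{10})/72\approx0.725$, so the final constant $\mu_{\opt}=(9+8\sqrt{10})/215$ is unaffected.
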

  Next, we will estimate $r^*$ in a sequence of lemmas.
  \begin{lemma}
  	Fix $t\in[0,2]$. The functions $w_1(t,a),w_2(t,a),w_3(t,a),W(t,a)$ are linear, monotonically increasing, and positive for $(3t+2)/4<a\le 2$. The functions $w_1(t,a),w_3(t,a),W(t,a)$ are positive and $w_2(t,a)\ge0$ at $a=(3t+2)/4$. Moreover,
  	\begin{enumerate}
  		\item  If $t\in(0,2)$, then the rational functions $w_1(t,a)/W(t,a)$, $w_2(t,a)/W(t,a)$, and $w_3(t,a)/W(t,a)$, respectively, are monotonically decreasing, increasing, and decreasing for $a\in[(3t+2)/4,2]$.
  		\item  If $t=0$, then {$w_1(t,a)/W(t,a)$} is a constant function, $w_2(t,a)/W(t,a)$ is an increasing function, and $w_3(t,a)/W(t,a)$ is a decreasing function for $a\in[(3t+2)/4,2]$.
  		\item  If $t=2$, then {the interval $[(3t+2)/4,2]$ shrinks to a point $2$}.
  	\end{enumerate}
  \end{lemma}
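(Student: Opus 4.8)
The plan is to treat each of $w_1,w_2,w_3,W$ as an affine function of $a$ for fixed $t\in[0,2]$, and to exploit the fact that the monotonicity of a ratio of two affine functions is governed by a single cross-determinant. First I would read off the coefficients of $a$: these are $2t+3$, $4t+4$, $8t$, and $(t+2)^2$, respectively. All are nonnegative on $[0,2]$ and strictly positive for $t>0$, while $2t+3$, $4t+4$, $(t+2)^2$ stay positive even at $t=0$ and only $8t$ vanishes there (making $w_3$ constant in $a$ at $t=0$). Hence each function is monotonically increasing in $a$, strictly so except for $w_3$ at $t=0$; this settles the linearity and monotonicity assertions.

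To obtain positivity I would evaluate at the left endpoint $a=(3t+2)/4$. A direct computation there gives the factored forms
\[
w_1=\tfrac{3(t+1)(t+2)}{4},\quad w_2=\tfrac{3t(t+2)}{2},\quad w_3=3t^2+4,\quad W=\tfrac{(t+2)^3}{4}.
\]
Thus $w_1,w_3,W$ are strictly positive at $a=(3t+2)/4$, whereas $w_2\ge0$ with equality exactly at $t=0$. Combined with the monotone increase in $a$ established above, all four functions are positive on $(3t+2)/4<a\le2$, which is precisely the positivity claim.

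For the three ratio statements the key observation is that if $w_i=p_i a+q_i$ and $W=ra+s$, then $\frac{d}{da}\bigl(w_i/W\bigr)=(p_i s-q_i r)/W^2$, so with $W>0$ the sign of the derivative equals that of the $t$-only determinant $D_i:=p_i s-q_i r$. Writing $r=(t+2)^2$ and $s=-\tfrac{t(t+2)^2}{2}$ and substituting the coefficients of $w_1,w_2,w_3$, I expect after factoring
\[
D_1=-\tfrac{t(t+2)^3}{4},\quad D_2=\tfrac{(2-t)(t+2)^3}{2},\quad D_3=-(t-2)^2(t+2)^2.
\]
Reading off signs then finishes everything: for $t\in(0,2)$ one has $D_1<0$, $D_2>0$, $D_3<0$, giving $w_1/W$ decreasing, $w_2/W$ increasing, and $w_3/W$ decreasing; at $t=0$ one has $D_1=0$ (so $w_1/W$ is constant) while $D_2>0$ and $D_3<0$ persist; and at $t=2$ the admissible interval $[(3t+2)/4,2]$ collapses to the single point $a=2$, so there is nothing to prove. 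The only genuine care needed—and the mild obstacle—is the bookkeeping of the degenerate endpoints $t=0$ and $t=2$ together with the borderline value $w_2=0$ at $t=0$; the algebra itself is routine once the determinant shortcut replaces differentiating the rational functions directly.
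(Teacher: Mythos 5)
Your proposal is correct and follows essentially the same route as the paper: the paper's appeal to the M\"obius transform identity $\frac{k_1x-b_1}{k_2x-b_2}=\frac{k_1}{k_2}+\frac{k_1b_2-k_2b_1}{k_2(k_2x-b_2)}$ is exactly your cross-determinant criterion $D_i=p_is-q_ir$ in disguise (the paper's sign quantity is $-D_i$ after factoring one copy of $t+2$ out of $W$), and your endpoint evaluations at $a=(3t+2)/4$ and factored determinants $D_1=-\tfrac{t(t+2)^3}{4}$, $D_2=\tfrac{(2-t)(t+2)^3}{2}$, $D_3=-(t-2)^2(t+2)^2$ all check out against the paper's computations. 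Your explicit remark that $w_3$ is merely constant (not strictly increasing) in $a$ at $t=0$ is in fact slightly more careful than the paper's blanket claim of positive leading coefficients.
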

  \begin{proof}
  	It is obvious that the linear functions $w_1(t,a),w_2(t,a),w_3(t,a),W(t,a)$ have positive leading coefficients and are monotonically increasing. We verify that
  	\begin{align*}
  		w_1(t,(3t+2)/4)&=(2t+3)(3t+2)/4-(3t^2/4+t)=3t^2/4+9t/4+3/2>0,\\
  		w_2(t,(3t+2)/4)&=(4t+4)(3t+2)/4-(3t^2/2+2t+2)=3t^2/2+3t\ge0,\\
  		w_3(t,(3t+2)/4)&=8t(3t+2)/4-(3t^2+4t-4)=3t^2+4>0,\\
  		W(t,(3t+2)/4)&=[(t+2)(3t+2)/4-(t^2/2+t)](t+2)=(t^2/4+t+1)(t+2)>0.
  	\end{align*}
  	This proves that $w_1(t,a),w_3(t,a),W(t,a)$ are positive at $a=(3t+2)/4$, and $w_2(t,a)\ge 0$ at $a=(3t+2)/4$. Consequently, all functions $w_1(t,a),w_2(t,a),w_3(t,a),W(t,a)$ are positive for $(3t+2)/4<a\le 2$.
  	
  	For any $k_1>0$ and $k_2>0$, the M{\"o}bius transform \cite{arnold2008mobius}
  	\[{k_1x-b_1\over k_2x-b_2}={k_1\over k_2}+{k_1b_2-k_2b_1\over k_2(k_2x-b_2)}\]
  	is an increasing (resp. decreasing) function for $x>b_2/k_2$ if $k_1b_2-k_2b_1$ is negative (resp. positive).
  	Given $t\in(0,2)$, it then follows from
  	\begin{align*}
  		(2t+3)(t^2/2+t)-(t+2)(3t^2/4+t)&=t^3/4+t^2+t>0,\\
  		(4t+4)(\frac{1}{2}t^2+t)-(t+2)(\frac{3}{2}t^2+2t+2)&=\frac{1}{2}t^3+t^2-2t-4=\frac{1}{2}(t+2)^2(t-2)<0,\\
  		8t(t^2/2+t)-(t+2)(3t^2+4t-4)&=t^3-2t^2-4t+8=(t-2)^2(t+2)>0
  	\end{align*}
  	that the rational functions $w_1(t,a)/W(t,a)$, $w_2(t,a)/W(t,a)$, and $w_3(t,a)/W(t,a)$, respectively, are monotonically decreasing, increasing, and decreasing for $a\in[(3t+2)/4,2]$. The proof for the cases $t=0$ and $t=2$ are trivial.
  \end{proof}

  \begin{lemma}
  	If $t\in[1,2]$, then $w(t,a)=w_1(t,a)$ and $w(t,a)/W(t,a)\le2/3$ for all $a\in[(3t+2)/4,2]$.
  \end{lemma}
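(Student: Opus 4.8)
The plan is to establish the two assertions in turn, leveraging the monotonicity of $w_1(t,a)/W(t,a)$ in $a$ already proved in the previous lemma.

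First I would identify $w_1$ as the pointwise minimum of $\{w_1,w_2,w_3\}$ on the rectangle. Since all three are linear in $a$ with positive leading coefficients when $t\ge1$, it suffices to show the differences are nonnegative, and because
\[
w_2(t,a)-w_1(t,a)=(2t+1)a-(3t^2/4+t+2),\qquad w_3(t,a)-w_1(t,a)=(6t-3)a-(9t^2/4+3t-4)
\]
are both increasing in $a$ for $t\ge1$ (leading coefficients $2t+1>0$ and $6t-3>0$), I only need nonnegativity at the left endpoint $a=(3t+2)/4$. A direct substitution gives
\[
w_2(t,(3t+2)/4)-w_1(t,(3t+2)/4)=\tfrac{3}{4}(t+2)(t-1)\ge0\quad(t\ge1),
\]
and
\[
w_3(t,(3t+2)/4)-w_1(t,(3t+2)/4)=\tfrac{1}{4}(9t^2-9t+10)>0,
\]
the latter being strictly positive since its discriminant $81-360<0$. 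This yields $w(t,a)=w_1(t,a)$ throughout $a\in[(3t+2)/4,2]$.

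For the second assertion I would invoke the previous lemma: $w_1(t,a)/W(t,a)$ is decreasing in $a$ on $[(3t+2)/4,2]$, so its maximum is attained at $a=(3t+2)/4$. The crucial simplification is that numerator and denominator factor cleanly there, namely
\[
w_1(t,(3t+2)/4)=\tfrac{3}{4}(t+1)(t+2),\qquad W(t,(3t+2)/4)=\tfrac{1}{4}(t+2)^3,
\]
so the ratio collapses to $3(t+1)/(t+2)^2$. It then remains to check $3(t+1)/(t+2)^2\le2/3$ for $t\in[1,2]$, which is equivalent to $2(t+2)^2-9(t+1)=(2t+1)(t-1)\ge0$; this holds for all $t\ge1$, with equality exactly at $t=1$.

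The main obstacle is the first part: confirming that $w_1$ dominates $w_2$ and $w_3$ from below over the entire interval rather than just at isolated points. This is what makes the endpoint reduction legitimate, and it rests on the linearity in $a$ together with the favorable signs of the resulting quadratics in $t$ on $[1,2]$. Once $w_1$ is identified as the minimum, the bound $w_1/W\le2/3$ follows mechanically from the monotone reduction to the endpoint and the two factorizations above.
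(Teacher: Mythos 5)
Your proof is correct and follows essentially the same route as the paper's: you reduce the differences $w_2-w_1$ and $w_3-w_1$ (linear and increasing in $a$) to the left endpoint $a=(3t+2)/4$, obtaining the same factorizations $\tfrac34(t-1)(t+2)\ge0$ and $\tfrac14(9t^2-9t+10)>0$, and then use the preceding lemma's monotonicity of $w_1/W$ in $a$ to evaluate the ratio at the endpoint as $3(t+1)/(t+2)^2\le 2/3$. The only cosmetic difference is that you make the appeal to monotonicity and the factorization $2t^2-t-1=(2t+1)(t-1)$ explicit where the paper leaves them implicit.
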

  \begin{proof}
  	For any $a\in[(3t+2)/4,2]$, since
  	\begin{align*}
  		w_2(t,a)-w_1(t,a)&=(2t+1)a-(3t^2/4+t+2)
  		\\&\ge (2t+1)(3t+2)/4-(3t^2/4+t+2)
  		 =3(t-1)(t+2)/4
  		 \ge0,
  	\end{align*}
  	and
  	\begin{align*}
  		w_3(t,a)-w_1(t,a)&=(6t-3)a-(9t^2/4+3t-4)
  		\\&\ge(6t-3)(3t+2)/4-(9t^2/4+3t-4)
  		 =9t^2/4-9t/4+5/2
  		 >0,
  	\end{align*}
  	we have $w(t,a)=\min\{w_1(t,a),w_2(t,a),w_3(t,a)\}=w_1(t,a)$.
  	Consequently,
  \[{w(t,a)\over W(t,a)}\le {w_1(t,(3t+2)/4)\over W(t,(3t+2)/4)}={3(t+1)\over(t+2)^2}\le{2\over3}.\]
  	This completes the proof.
  \end{proof}

  \begin{lemma}
  	If $t\in[1/2,1]$, then $w_1(t,a)<w_3(t,a)$ and $w(t,a)/W(t,a)\le24/35$ for all $a\in[(3t+2)/4,2]$.
  \end{lemma}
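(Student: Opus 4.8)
The plan is to reduce the claim to a single-variable estimate evaluated at the unique point where $w_1$ and $w_2$ coincide.

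First I would dispose of $w_3$. A direct computation gives $w_3(t,a)-w_1(t,a)=(6t-3)a-(9t^2/4+3t-4)$, which for $t\in[1/2,1]$ is nondecreasing in $a$ since $6t-3\ge0$. Hence its minimum over $a\in[(3t+2)/4,2]$ is attained at the left endpoint, where it equals $(9t^2-9t+10)/4$; as $9t^2-9t+10$ has negative discriminant it is strictly positive, giving $w_1<w_3$. Consequently $w(t,a)=\min\{w_1(t,a),w_2(t,a)\}$ on the whole interval, and since $W(t,a)>0$ there (by the previous lemma) we may write $w/W=\min\{w_1/W,\,w_2/W\}$.

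Next I would exploit the monotonicity from the previous lemma: for $t\in(0,2)$ the ratio $w_1/W$ is decreasing and $w_2/W$ is increasing in $a$. The pointwise minimum of a decreasing and an increasing function is maximized exactly at their crossing, so I would locate the value $a^\ast$ solving $w_1=w_2$, namely $a^\ast=(3t^2+4t+8)/(8t+4)$, and check that it lies in the admissible interval. Indeed $a^\ast-(3t+2)/4=-3(t+2)(t-1)/(8t+4)\ge0$ for $t\le1$, while $a^\ast\le2$ is equivalent to $3t(t-4)\le0$; both hold on $[1/2,1]$. Therefore $w/W\le w_1(t,a^\ast)/W(t,a^\ast)$ throughout.

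The remaining, and most delicate, step is the algebraic evaluation at $a^\ast$. Substituting and clearing the common denominator $8t+4$, I expect $w_1(t,a^\ast)=3(t+2)^2/\bigl(2(2t+1)\bigr)$ and, using the factorization $-t^3+12t+16=(t+2)^2(4-t)$, $W(t,a^\ast)=(t+2)^3(4-t)/\bigl(4(2t+1)\bigr)$; these collapse to the clean bound
\[
\frac{w_1(t,a^\ast)}{W(t,a^\ast)}=\frac{6}{(t+2)(4-t)}.
\]
Finally $6/\bigl((t+2)(4-t)\bigr)\le24/35$ rearranges to $(t-1/2)(t-3/2)\le0$, which holds precisely on $[1/2,3/2]\supset[1/2,1]$, with equality at $t=1/2$. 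The main obstacle is purely computational: carrying out the substitution into $w_1$ and $W$ and recognizing the cubic factorization that makes $W(t,a^\ast)$ simplify. The conceptual point to get right is that, unlike the case $t\in[1,2]$ where the extremum sat at an endpoint, here the crossing $a^\ast$ lies strictly inside the interval (except at $t=1$), so one must evaluate at $a^\ast$ rather than at $a=(3t+2)/4$.
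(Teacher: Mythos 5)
Your proof is correct and takes essentially the same route as the paper's: you eliminate $w_3$ by the same endpoint computation (positivity of $(9t^2-9t+10)/4$), split at the crossing $a^\ast=(3t^2/4+t+2)/(2t+1)=(3t^2+4t+8)/(8t+4)$ of $w_1$ and $w_2$, invoke the monotonicity of $w_1/W$ and $w_2/W$ from the preceding lemma, and bound the common crossing value, which is exactly the paper's $6(t+2)/(-t^3+12t+16)$. Your factorization $-t^3+12t+16=(t+2)^2(4-t)$ and the reduction of the final bound to $(t-1/2)(t-3/2)\le0$ (with equality at $t=1/2$) is merely a cleaner verification of the same inequality.
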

  \begin{proof}
  	It is easily seen that
  	\begin{align*}
  		w_3(t,a)-w_1(t,a)&=(6t-3)a-(9t^2/4+3t-4)
  		\\&\ge(6t-3)(3t+2)/4-(9t^2/4+3t-4)
  		 =9t^2/4-9t/4+5/2
  		 >0.
  	\end{align*}
  	We also note that
  	\begin{align*}
  		w_2(t,(3t+2)/4)-w_1(t,(3t+2)/4)&=3(t-1)(t+2)/2\le0.
  	\end{align*}
  	If $a\ge (3t^2/4+t+2)/(2t+1)$, then
  	\[w_2(t,a)-w_1(t,a)=(2t+1)a-(3t^2/4+t+2)\ge0,\]
  	which implies $w(t,a)=w_1(t,a)$, and hence
  	\begin{align*}
  		{w(t,a)\over W(t,a)}&\le {w_1(t,(3t^2/4+t+2)/(2t+1))\over W(t,(3t^2/4+t+2)/(2t+1))}
  		={6(t+2)\over-t^3+12t+16}
  		\le{24\over35}.
  	\end{align*}
  	If $a\le (3t^2/4+t+2)/(2t+1)$, then
  \[w_2(t,a)-w_1(t,a)=(2t+1)a-(3t^2/4+t+2)\le0,\]
  	which implies $w(t,a)=w_2(t,a)$ and
  	\begin{align*}
  		{w(t,a)\over W(t,a)}&\le {w_2(t,(3t^2/4+t+2)/(2t+1))\over W(t,(3t^2/4+t+2)/(2t+1))}
  		={6(t+2)\over-t^3+12t+16}
  		\le{24\over35}.
  	\end{align*}
  	The proof is completed.
  \end{proof}

  \begin{lemma}
  	If $t\in[0,1/2]$, then $w(t,a)/W(t,a)\le(68-5\sqrt{10})/72$ for all $a\in[(3t+2)/4,2]$.
  \end{lemma}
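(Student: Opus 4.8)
The plan is to collapse the two-variable maximization of $w(t,a)/W(t,a)$ over the strip $\{0\le t\le 1/2,\ (3t+2)/4\le a\le 2\}$ into a single-variable problem in $t$, using the monotonicity already in hand. Fix $t\in[0,1/2]$. The preceding lemma gives that $w_1(t,a)/W(t,a)$ and $w_3(t,a)/W(t,a)$ are decreasing in $a$ while $w_2(t,a)/W(t,a)$ is increasing. Since $w=\min\{w_1,w_2,w_3\}$, the quotient $w/W=\min\{w_1/W,w_2/W,w_3/W\}$ is the pointwise minimum of two decreasing branches and one increasing branch; hence its maximum over $a$ is attained at the value $a_0=a_0(t)$ where the increasing branch $w_2/W$ meets the lower envelope $\min\{w_1/W,w_3/W\}$ of the two decreasing branches, so that $\max_a w/W = w_2(t,a_0)/W(t,a_0)$.

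The next step is to decide which decreasing branch is active at $a_0$. I would compute $w_3-w_1=(6t-3)a-(9t^2/4+3t-4)$, which is nonincreasing in $a$ for $t\le 1/2$ and strictly positive at the left endpoint $a=(3t+2)/4$ (where it equals $9t^2/4-9t/4+5/2$). Thus $w_1\le w_3$ up to a threshold in $a$ and $w_1\ge w_3$ beyond it, so the binding crossing is either $w_1/W=w_2/W$ or $w_3/W=w_2/W$. Each is linear in $a$: for instance $w_1=w_2$ gives $a=(3t^2/4+t+2)/(2t+1)$, and the companion equation $w_3=w_2$ gives an analogous expression. Substituting the relevant $a_0(t)$ back into $w_2/W$ produces an explicit rational function $\phi(t)$ with $w(t,a)/W(t,a)\le\phi(t)$ for all admissible $a$.

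It then remains to maximize $\phi(t)$ over $t\in[0,1/2]$. The stationarity condition $\phi'(t)=0$ — equivalently, the point at which all three of $w_1,w_2,w_3$ coincide at $a_0$, the signature of an interior min-max optimum — reduces to the quadratic $9t^2-28t+4=0$, whose root in $[0,1/2]$ is $t^\ast=(14-4\sqrt{10})/9\approx 0.15$. Evaluating $\phi(t^\ast)$ yields the claimed value $(68-5\sqrt{10})/72$. Checking the two endpoints gives $\phi(0)=2/3$ and $\phi(1/2)=24/35$ (consistent with the adjacent lemma), both strictly below $\phi(t^\ast)$, so the interior point is the maximizer and the bound holds across the whole range.

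The main obstacle I anticipate is the bookkeeping in the second step: correctly identifying the active decreasing branch as a function of $t$ and verifying that the crossing $a_0(t)$ actually lands inside $[(3t+2)/4,2]$, so that the single-variable reduction is legitimate over the entire strip rather than only near $t^\ast$. Once the correct $\phi(t)$ is isolated, the conceptual crux is confirming that the interior critical point dominates both endpoints and that it coincides with the triple-intersection point solving $9t^2-28t+4=0$; the remaining manipulations (clearing denominators and reducing the surd) are routine but lengthy.
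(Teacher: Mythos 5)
Your proposal is correct and is essentially the paper's own argument in different packaging: the paper's three-case analysis (on which of $w_1,w_2,w_3$ attains the minimum), combined with the monotonicity lemma, amounts exactly to your envelope/crossing reduction, producing the same binding crossings $a_0(t)=(3t^2/4+t+2)/(2t+1)$ and $a_0(t)=(6-2t-3t^2/2)/(4-4t)$, the same threshold $t_0=(14-4\sqrt{10})/9$ (indeed the root of $9t^2-28t+4=0$ where the two crossings coincide), and the same univariate bounds $6(t+2)/(-t^3+12t+16)$ and $8(2+3t)/\bigl((12+4t-t^2)(2+t)\bigr)$, each maximized at $t_0$ with value $(68-5\sqrt{10})/72$. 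One small caution when you execute the plan: the maximizer $t^\ast=t_0$ is a kink of the piecewise function $\phi$ (left piece increasing, right piece decreasing), not a smooth stationary point, so the clean way to close the argument is to verify monotonicity of each piece --- e.g.\ the derivative of $6(t+2)/(-t^3+12t+16)$ has the sign of $12(t-1)(t+2)^2<0$ on $[0,1]$ --- rather than relying on $\phi'(t^\ast)=0$ and endpoint comparison alone.
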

  \begin{proof}
  	We have to consider three cases.
  	\begin{enumerate}[{Case} 1:]
  		\item $w(t,a)=w_1(t,a)$.
  		It follows from
  		\begin{align*}
  			0\le w_2(t,a)-w_1(t,a)=(2t+1)a-(3t^2/4+t+2),
  		\end{align*}
  		and
  		\begin{align*}
  			0\ge w_1(t,a)-w_3(t,a)=(3-6t)a-(4-3t-9t^2/4)
  		\end{align*}
  		that
  		\begin{align*}
  			{3t^2/4+t+2\over2t+1}\le a\le{4-3t-9t^2/4\over3-6t},
  		\end{align*}
  		which together with $t\in[0,2]$ gives
  	$t\ge t_0:={14-4\sqrt{10}\over9}.$
  		Consequently,
  		\begin{align*}
  			{w(t,a)\over W(t,a)}&\le {w_1(t,(3t^2/4+t+2)/(2t+1))\over W(t,(3t^2/4+t+2)/(2t+1))}
  			={6(t+2)\over-t^3+12t+16}
  			\le{68-5\sqrt{10}\over72}.
  		\end{align*}
  		
  		\item $w(t,a)=w_2(t,a)$.
  		It follows from
  		\begin{align*}
  			0\ge w_2(t,a)-w_1(t,a)=(2t+1)a-(3t^2/4+t+2),
  		\end{align*}
  		and
  		\begin{align*}
  			0\ge w_2(t,a)-w_3(t,a)=(4-4t)a-(6-2t-3t^2/2)
  		\end{align*}
  		that
  		\begin{align*}
  			a\le\min\{{3t^2/4+t+2\over2t+1},{6-2t-3t^2/2\over4-4t}\}.
  		\end{align*}
  		If
  		\[{3t^2/4+t+2\over2t+1}\le{6-2t-3t^2/2\over4-4t},\]
  		then
  $t\ge t_0:={14-4\sqrt{10}\over9},$
  		and
  		\begin{align*}
  			{w(t,a)\over W(t,a)}&\le {w_2(t,(3t^2/4+t+2)/(2t+1))\over W(t,(3t^2/4+t+2)/(2t+1))}
  			={6(t+2)\over-t^3+12t+16}
  			\le{68-5\sqrt{10}\over72}.
  		\end{align*}
  		If
  		\[{3t^2/4+t+2\over2t+1}\ge{6-2t-3t^2/2\over4-4t},\]
  		then
  	$t\le t_0:={14-4\sqrt{10}\over9},$
  		and
  		\begin{align*}
  			{w(t,a)\over W(t,a)}&\le {w_2(t,(6-2t-3t^2/2)/(4-4t))\over W(t,(6-2t-3t^2/2)/(4-4t))} \\
  			&={8(2+3t)\over(12+4t-t^2)(2+t)}
  			\le{68-5\sqrt{10}\over72}.
  		\end{align*}
  		
  		\item $w(t,a)=w_3(t,a)$.
  		It follows from
  		\begin{align*}
  			0\le w_1(t,a)-w_3(t,a)=(3-6t)a-(4-3t-9t^2/4)
  		\end{align*}
  		and
  		\begin{align*}
  			0\le w_2(t,a)-w_3(t,a)=(4-4t)a-(6-2t-3t^2/2)
  		\end{align*}
  		that
  		\begin{align*}
  			a\ge\max\{{4-3t-9t^2/4\over3-6t},{6-2t-3t^2/2\over4-4t}\}.
  		\end{align*}
  		If
  		\[{4-3t-9t^2/4\over3-6t}\ge{6-2t-3t^2/2\over4-4t},\]
  		then
  		$t\ge t_0:={14-4\sqrt{10}\over9},$
  		and
  		\begin{align*}
  			{w(t,a)\over W(t,a)}&\le {w_3(t,(4-3t-9t^2/4)/(3-6t))\over W(t,(4-3t-9t^2/4)/(3-6t))} \\
  			&={4(-9t^2-4t+12)\over(3t^2-18t+16)(t+2)^2}
  			\le{68-5\sqrt{10}\over72}.
  		\end{align*}
  		If
  		\[{4-3t-9t^2/4\over3-6t}\le{6-2t-3t^2/2\over4-4t},\]
  		then
  		$t\le t_0:={14-4\sqrt{10}\over9},$
  		and
  		\begin{align*}
  			{w(t,a)\over W(t,a)}&\le {w_3(t,(6-2t-3t^2/2)/(4-4t))\over W(t,(6-2t-3t^2/2)/(4-4t))}\\
  			&={8(2+3t)\over(12+4t-t^2)(2+t)}
  			\le{68-5\sqrt{10}\over72}.
  		\end{align*}
  	\end{enumerate}
  	This completes the proof.
  \end{proof}
  A combination of the above lemmas gives $r_{a,b}\le r^*\le(68-5\sqrt{10})/72$ for $(a,b)\in R_3$.
  Since $J(a,b)=(1-r_{a,b})/(1+r_{a,b})$, we have the following result.
  \begin{proposition}\label{prop-3}
  	If $(a,b)\in R_3$, then $J(a,b)\ge(9+8\sqrt{10})/215$.
  \end{proposition}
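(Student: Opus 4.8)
The plan is to assemble the upper bounds on $r_{a,b}$ that the preceding lemmas have already produced and then convert the sharpest of these into the desired lower bound on $J$ via the monotone relation $J(a,b)=(1-r_{a,b})/(1+r_{a,b})$. Since this map is strictly decreasing in $r_{a,b}\in(0,1]$, any uniform upper bound $r_{a,b}\le r^\sharp$ on $R_3$ immediately yields $J(a,b)\ge(1-r^\sharp)/(1+r^\sharp)$. Hence the entire task reduces to pinning down the largest value that $r_{a,b}$ can attain on $R_3$ and then performing one algebraic simplification.

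First I would invoke the splitting of $R_3$ into the regimes $a\ge 2$ and $a\le 2$ recorded in the earlier lemma. For $a\ge 2$ that lemma gives directly $r_{a,b}\le 9/14$. For $a\le 2$ it gives $r_{a,b}\le r^*$, and the three subsequent lemmas bound the auxiliary quantity $r^*$ piecewise over the parameter range $t\in[0,2]$: one has $w(t,a)/W(t,a)\le 2/3$ on $t\in[1,2]$, $\le 24/35$ on $t\in[1/2,1]$, and $\le(68-5\sqrt{10})/72$ on $t\in[0,1/2]$.

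Second I would compare the four resulting constants. Numerically $9/14\approx0.643$, $2/3\approx0.667$, $24/35\approx0.686$, and $(68-5\sqrt{10})/72\approx0.725$, so the bound coming from the small-$t$ regime dominates and $r_{a,b}\le(68-5\sqrt{10})/72$ holds uniformly on $R_3$. Feeding $r^\sharp=(68-5\sqrt{10})/72$ into the monotone relation gives $1-r^\sharp=(4+5\sqrt{10})/72$ and $1+r^\sharp=(140-5\sqrt{10})/72$, whence $J(a,b)\ge(4+5\sqrt{10})/(140-5\sqrt{10})$; rationalizing the denominator produces $(810+720\sqrt{10})/19350$, and cancelling the common factor $90$ yields exactly $(9+8\sqrt{10})/215$.

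The genuine difficulty of the whole argument lives in the preceding lemmas rather than in this final proposition: extracting the value $(68-5\sqrt{10})/72$ demanded the delicate three-case analysis of $w/W$ on $t\in[0,1/2]$ around the threshold $t_0=(14-4\sqrt{10})/9$. For the proposition itself the only points that need care are verifying that $(68-5\sqrt{10})/72$ really is the maximum of the four candidate bounds — so that no other regime could produce a larger $r_{a,b}$ — and carrying out the rationalization without arithmetic slips. Neither step poses a real obstacle, so the proof is in essence a bookkeeping consolidation of the established lemmas together with one clean calculation.
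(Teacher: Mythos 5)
Your proposal is correct and takes essentially the same route as the paper: the paper likewise combines the preceding lemmas into the uniform bound $r_{a,b}\le r^*\le(68-5\sqrt{10})/72$ on $R_3$ and then applies the decreasing map $J(a,b)=(1-r_{a,b})/(1+r_{a,b})$, and your rationalization to $(9+8\sqrt{10})/215$ checks out. Your explicit comparison of the four constants $9/14$, $2/3$, $24/35$, and $(68-5\sqrt{10})/72$ is merely the bookkeeping the paper leaves implicit.
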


\begin{proof}[Proof of Theorem \ref{Thm_SAI92D}]
  It follows from Propositions \ref{prop-0}, \ref{prop-12}, and  \ref{prop-3} that
  $$J(a,b)\ge(9+8\sqrt{10})/215$$
  for all $(a,b)\in R=R_0\cup R_1\cup R_2\cup R_3$.
  On the other hand, by choosing $(a,b)=(5/3,11/3)\in R_3$, we obtain from a simple calculation that \[J(5/3,11/3)=(9+8\sqrt{10})/215\]
   with $m_{5/3,11/3}=16/3$ and $\chi_{5/3,11/3}=(4352+320\sqrt{10})/729$.
  Hence, we have
 \[\mu_{\opt}=\min_{(a,b)\in R} J(a,b)={9+8\sqrt{10}\over215},\]
  and the corresponding optimal value of $\om$ is calculated as
  \[\om_{\opt}={2\over m_{5/3,11/3}+\chi_{5/3,11/3}}={309-12\sqrt{10}\over1720},\]
  which completes the proof of Theorem \ref{Thm_SAI92D}.
    \end{proof}
\bibliographystyle{siamplain}
\bibliography{Control_Laplacebib,EllipticControl,../LFA,../waveControl,../waveControl2}
\end{document}